\theoremstyle{plain}
\newtheorem{theorem}{Theorem}[section]
\newtheorem{proposition}[theorem]{Proposition}
\newtheorem{lemma}[theorem]{Lemma}
\newtheorem{corollary}[theorem]{Corollary}
\newtheorem*{theorem*}{Theorem}
\theoremstyle{remark}
\newtheorem{remark}[theorem]{Remark}
\theoremstyle{definition}
\DeclareMathOperator{\tr}{tr}
\DeclareMathOperator{\vol}{vol}
\DeclareMathOperator{\cpt}{cpt}
\DeclareMathOperator{\SL}{SL}
\DeclareMathOperator{\GL}{GL}
\DeclareMathOperator{\PGL}{PGL}
\DeclareMathOperator{\PSL}{PSL}
\DeclareMathOperator{\St}{St}
\DeclareMathOperator{\SO}{SO}
\DeclareMathOperator{\JL}{JL}
\DeclareMathOperator{\SU}{SU}
\DeclareMathOperator{\PD}{PD}
\DeclarePairedDelimiter\abs{\lvert}{\rvert}
\title{Von Neumann Dimensions and Trace Formulas II:\\
A Jacquet-Langlands correspondence for Arithmetic Group Algebras in $\GL(2)$}
\author{Jun Yang}
\date{}
\begin{document}
\maketitle
\begin{abstract}
We propose a global Jacquet-Langlands correspondence for the modules over the von Neumann algebras of $S$-arithmetic subgroups of $\GL(2)$ and of a quaternion algebra $D$, which are both defined over a totally real number field $F$. 
If a representation $\pi'=\otimes\pi'_v$ of $D^{\times}(\mathbb{A}_F)$ corresponds to a representation $\pi=\otimes \pi_v$ of $\GL(2,\mathbb{A}_F)$, 
we have
\begin{equation*}
\frac{\dim_{\mathcal{L}(\SL(2,\mathcal{O}_S))}\pi_S}{\dim_{\mathbb{C}}\pi'_S}=\abs*{ \frac{\zeta_{D}(0)}{\zeta_{F}(0)}},
\end{equation*}
where $\zeta_F,\zeta_{D}$ are the zeta functions of $F,D$ respectively. 
\end{abstract}

\section{Introduction}

In the 1960s, H. Jacquet and R. Langlands \cite{JL70} proved a correspondence between the automorphic representations of the multiplicative group $D^{\times}$ of a quaternion algebra $D$ over a global field $F$ and certain cuspidal automorphic representation of $\GL(2)$. 
They also gave the local correspondence between
the square-integrable representations of $\GL(2,F_v)$ and the admissible irreducible representations of the multiplicative group $D^{\times}(F_v)$.  
The local results have been generalized to $\GL(n)$ and higher dimensional division algebras by Deligne-Kazhdan-Vign\'{e}ras\cite{DKV84} and J. Rogwaski\cite{Rgw83} while the global one has been generalized later by A. Badulescu \cite{Badu08}. 
One of the key techniques is to show that for certain functions $\varphi,\varphi'$ on $\GL(2,\mathbb{A}_F)$ and $D^{\times}(\mathbb{A}_F)$ that matches appropriately, the traces of the actions of $\varphi,\varphi'$ on the cuspidal spectrum of $G=\GL(2)$ and the entire automorphic spectrum of $D^{\times}$ (excludes $1$-dimensional ones) coincide, i.e.,
\begin{equation}\label{eintrotf}
\tr(\phi|_{L_{\rm{cusp}}^2(Z(\mathbb{A})G(F)\backslash G(\mathbb{A}),\omega)})=\tr(\phi'|_{L_0^2(Z(\mathbb{A})D^{\times}(F)\backslash D^{\times}(\mathbb{A}),\omega)}),
\end{equation}
for any character $\omega$ of $F^{\times}\backslash \mathbb{A}^{\times}$. 
As one term of these traces, it is also known that, if $\varphi$ is matched to $\varphi'$, we have $\vol(Z(\mathbb{A})G(F)\backslash G(\mathbb{A}))\cdot \phi(1)=\vol(Z(\mathbb{A})D^{\times}(F)\backslash D^{\times}(\mathbb{A}))\cdot \phi'(1)$. 
Finally, it gives a well-defined correspondence $\pi'=\otimes \pi'_v\longmapsto \pi=\otimes \pi_v$.

Moreover, for each place $v$ in the ramified set $S$ of $D$, the dimension and formal degree are proportional, i.e., $d(\pi_v)/\dim \pi'_v=d(\rho_v)/\dim \rho'_v$, if $\pi_v'\mapsto \pi_v$ and $ \rho_v'\mapsto \rho_v$,  
where $d(-)$ denotes the formal degree of a square-integrable representation. 
This property is known to be true for $\GL(n)$ and higher dimensional division algebras (see \cite{Rgw83} and \cite{CMS90}). 
Thus we obtain
\begin{equation}\label{eintrofdim2}
     \vol(Z(\mathbb{A})G(F)\backslash G(\mathbb{A}))\cdot \prod_{v\in S}d(\pi_v)=\vol(Z(\mathbb{A})D^{\times}(F)\backslash D^{\times}(\mathbb{A}))\cdot \prod_{v\in S}d(\pi'_v).
\end{equation}

As an analogy of Equation \ref{eintrofdim2},  there is a formula about von Neumann dimensions:  
Given a lattice $\Gamma$ in a unimodular group $G$, a square-integrable representation $H$ (of $G$) is naturally a module over the group von Neumann algebra $\mathcal{L}(\Gamma)$ of $\Gamma$, which leads to the real-valued von Neumann dimension $\dim_{\mathcal{L}(\Gamma)}H$. 
If we take $G$ to be a connected semi-simple real Lie group with a square-integrable representation $(\pi,H)$, Atiyah-Schmid \cite{AS77} proved 
\begin{equation}\label{eintroAS}
    \dim_{\mathcal{L}(\Gamma)}H=\vol(\Gamma\backslash G)\cdot d(\pi),
\end{equation}
which says the von Neumann dimension does not depend on the choice of Haar measure since the two dependencies cancel out on the right-hand side. 
Thus Equations \ref{eintrofdim2} and 
\ref{eintroAS} may predict a correspondence for the $\mathcal{L}(\Gamma)$-modules 
for an arithmetic group $\Gamma$. 

One interesting property is that the equivalence classes of $\mathcal{L}(\Gamma)$-modules can be given by their von Neumann dimensions: if $\Gamma$ has the infinite conjugacy class property (e.g., $\Gamma=\PSL(2,\mathbb{Z})$, see Section \ref{svna} for the definition), this dimension determines the equivalence class of $\mathcal{L}(\Gamma)$-modules: $H_1\cong H_2$ as $\mathcal{L}(\Gamma)$-modules if and only if $\dim_{\mathcal{L}(\Gamma)}H_1=\dim_{\mathcal{L}(\Gamma)}H_2$. 
Thus it is reasonable to consider the correspondence between these dimensions over $\mathcal{L}(\Gamma),\mathcal{L}(\Gamma')$ for some arithmetic subgroups $\Gamma,\Gamma'$ in $\GL(2)$ and $D^{\times}$. 


The term $\prod_{v\in S}d(\pi_v)$ in Equation \ref{eintrofdim2} is the formal degree for the group $\Pi_{v\in S}\GL(2,F_v)$.  
Thus it is reasonable to consider the pairs of $S$-arithmetic subgroups $\PGL(2,\mathcal{O}_S)$ with $\PD^{\times }(\mathcal{O}_S)$, and similarly $\SL(2,\mathcal{O}_S)$ with $D^{1}(\mathcal{O}_S)$. 
Note both $\PD^{\times }(\mathcal{O}_S)$ and $D^{1}(\mathcal{O}_S)$ are finite groups. 

Consider a totally real number field $F$. 
Let $\zeta_F,\zeta_{D}$ be the (Dedekind) zeta function of $F$ and $D$ respectively. 
We assume $D$ ramifies at each real place of $F$. 
\begin{theorem*}
Given a global Jacquet-Langlands correspondence   $\pi'=\otimes\pi'_v$ $\mapsto$ $\pi=\otimes \pi_v$, 
we have
\begin{equation*}
\frac{\dim_{\mathcal{L}(\SL(2,\mathcal{O}_S))}\pi_S}{\dim_{\mathcal{L}(D^{1}(\mathcal{O}_S))}\pi'_S}=\abs*{\frac{\zeta_{D}(0)}{\zeta_{F}(0)}} \cdot |D^{1}(\mathcal{O}_S)|. 
\end{equation*}
\end{theorem*}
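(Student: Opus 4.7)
The plan is to reduce both von Neumann dimensions to the common form \emph{covolume $\times$ product of formal degrees}, and then identify the two resulting factors via (i) the formal-degree matching \eqref{eintrofdim2} and (ii) a Tamagawa-type calculation of adelic volumes. First I apply the $S$-arithmetic generalization of Atiyah--Schmid \eqref{eintroAS} (the main tool from Paper I) to obtain
\[
\dim_{\mathcal{L}(\SL(2,\mathcal{O}_S))}\pi_S \;=\; \vol\!\bigl(\SL(2,\mathcal{O}_S)\backslash{\textstyle\prod_{v\in S}}\SL(2,F_v)\bigr)\cdot\prod_{v\in S}d(\pi_v).
\]
On the quaternionic side, $D^1(\mathcal{O}_S)$ is finite, so $\mathcal{L}(D^1(\mathcal{O}_S))=\mathbb{C}[D^1(\mathcal{O}_S)]$ and $\dim_{\mathcal{L}(D^1(\mathcal{O}_S))}\pi'_S=\dim_{\mathbb{C}}\pi'_S/|D^1(\mathcal{O}_S)|$. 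Using $\dim_{\mathbb{C}}\pi'_v=d(\pi'_v)\cdot\vol(D^1(F_v))$ (valid because $D^1(F_v)$ is compact at every $v\in S$) together with the counting identity $\prod_v\vol(D^1(F_v))=|D^1(\mathcal{O}_S)|\cdot\vol(D^1(\mathcal{O}_S)\backslash\prod_v D^1(F_v))$, the quaternionic dimension takes the same structural form, and the sought ratio becomes
\[
|D^1(\mathcal{O}_S)|\cdot\frac{\vol(\SL(2,\mathcal{O}_S)\backslash\prod_{v\in S}\SL(2,F_v))}{\vol(D^1(\mathcal{O}_S)\backslash\prod_{v\in S} D^1(F_v))}\cdot\frac{\prod_{v\in S} d(\pi_v)}{\prod_{v\in S} d(\pi'_v)},
\]
the prefactor $|D^1(\mathcal{O}_S)|$ being exactly the discrepancy between $\dim_{\mathbb{C}}$ and $\dim_{\mathcal{L}}$ that appears on the right of the theorem.

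For the formal-degree ratio I invoke \eqref{eintrofdim2} at the $\GL(2),D^{\times}$ level and translate it to the $\SL(2),D^{1}$ level via the standard scaling between the formal degree of a discrete series of a reductive group and that of its derived subgroup; the $S$-adic central volumes on the two sides agree and cancel, so no extra factor appears. For the covolume ratio I express each $S$-adic covolume in terms of the corresponding Tamagawa volume of $G(F)\backslash G(\mathbb{A})$, using Weil's theorem that the Tamagawa number equals $1$ for both $\SL(2)$ and its inner form $D^{1}$, together with strong approximation for $\SL(2)$ (valid since $\SL(2,F_v)$ is noncompact at each archimedean place in $S$) and a class-number decomposition for $D^{1}$ (where strong approximation genuinely fails because every $D^{1}(F_v)$, $v\in S$, is compact). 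Since $D$ is split outside $S$, the local volumes at $v\notin S$ cancel pairwise; Prasad's volume formula then packages the remaining local contributions as Euler factors of $\zeta_F$ and its $D$-twisted analogue, and the functional equation of those $L$-functions converts their natural occurrence at $s=2$ into the values at $s=0$ demanded by the theorem, producing $|\zeta_D(0)/\zeta_F(0)|$.

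The hardest part will be the bookkeeping of Haar-measure normalizations across the three separate layers (Atiyah--Schmid, \eqref{eintrofdim2}, and the Tamagawa measure) and across the four groups $\GL(2), \SL(2), D^{\times}, D^{1}$: each ingredient is naturally stated with its own measure, and a single misaligned factor at any place in $S$ would multiply the final answer by a nontrivial scalar. A secondary subtlety is that for totally real $F$ of degree $n\geq 2$ the value $\zeta_F(0)$ vanishes to order $n-1$ (by the functional equation and the pole of $\zeta_F$ at $s=1$), so $\zeta_D(0)/\zeta_F(0)$ should be interpreted as the ratio of leading Taylor coefficients; one must then verify that the Prasad-formula manipulation isolates that leading term cleanly.
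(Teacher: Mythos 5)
Your overall architecture coincides with the paper's on the split side: reduce $\dim_{\mathcal{L}(\SL(2,\mathcal{O}_S))}\pi_S$ to covolume times formal degree via the Atiyah--Schmid formula (Proposition \ref{pas}), compute the covolume of $\SL(2,\mathcal{O}_S)$ by Prasad's volume formula (Equation \ref{eSL2S}), and use the functional equation of $\zeta_F$ to trade $\zeta_F(2)$ for $\zeta_F(-1)$; the finite-group identity $\dim_{\mathcal{L}(D^1(\mathcal{O}_S))}\pi'_S=\dim_{\mathbb{C}}\pi'_S/|D^1(\mathcal{O}_S)|$ is exactly Proposition \ref{pvdim}(3). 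Where you diverge is in how the two sides are matched, and that is where the proposal has a genuine gap. You propose to use the global identity (Equation \ref{eintrofdim2}) together with Tamagawa numbers and a ``class-number decomposition'' to compare the $S$-adic covolume of $\SL(2,\mathcal{O}_S)$ with that of $D^{1}(\mathcal{O}_S)$. But since every $D^{1}(F_v)$, $v\in S$, is compact, strong approximation fails for $D^{1}$, and the adelic volume $\vol(D^{1}(F)\backslash D^{1}(\mathbb{A}))$ is distributed over all the ideal classes: what the mass formula controls is $\sum_i |\Gamma_i|^{-1}$ over a full set of class representatives, not the single term $|D^{1}(\mathcal{O}_S)|^{-1}$ attached to the standard order. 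The class number of $D$ does not appear on the right-hand side of the theorem, so your route obliges you to prove that it cancels, and nothing in the proposal does this.

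The paper sidesteps the quaternionic volume computation entirely: its key input is the \emph{local} proportionality $d(\pi_v)/d(\St_v)=\dim_{\mathbb{C}}\pi'_v$ (Lemma \ref{ldgdim}; Rogawski's theorem at the finite places plus the explicit $\GL(2,\mathbb{R})$ versus $\SU(2)$ computation at the archimedean ones). This converts $\prod_{v\in S}d(\pi_v)$ directly into $d(\St_S)\cdot\dim_{\mathbb{C}}\pi'_S$, so the whole computation collapses onto the split side --- covolume of $\SL(2,\mathcal{O}_S)$ times $d(\St_S)$ --- with no adelic volume, Tamagawa number, or class number of $D$ ever needed. You should replace your appeal to Equation \ref{eintrofdim2} and the Tamagawa computation by this lemma. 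Two further cautions. First, your reading of $\zeta_D(0)/\zeta_F(0)$ as a ratio of leading Taylor coefficients is not what the paper intends and would change the constant by $\lim_{s\to 0}\zeta_F(2s)/\zeta_F(s)=2^{\,n-1}$; the intended meaning is the formal cancellation of the factor $\zeta_F(2s)$ at $s=0$ in $\zeta_D(s)=\zeta_F(2s)\zeta_F(2s-1)\prod_{v\in S_f}(1-q_v^{1-2s})$, so that the ratio is $|\zeta_F(-1)|\prod_{v\in S_f}(q_v-1)$. Second, your first display applies Atiyah--Schmid to $\pi_S$ as an $\SL(2)$-module while using the $\GL(2)$ formal degrees $d(\pi_v)$; the restriction of $\pi_v$ to $\SL(2,F_v)$ is reducible in general and its formal degree differs from $d(\pi_v)$ by the index $|\PGL(2,F_v)/\PSL(2,F_v)|$, a normalization the paper has to track explicitly (Lemmas \ref{lratioPGLPSL} and \ref{lmuPGLS}) and which your bookkeeping would also need to absorb.
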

 
This gives a correspondence between the infinite dimensional module $\pi_S$ over the type $\text{II}_1$ algebra $\mathcal{L}(\SL(2,\mathcal{O}_S))$ and the finite dimensional module $\pi'_S$ over the type $\text{I}$ algebra $\mathcal{L}(D^{1}(\mathcal{O}_S))$. 
Let us outline its proof: Section \ref{spreJL} and \ref{svna} are provided with preliminaries and an emphasis on $S$-adelic groups and $S$-arithmetic groups, where some results are known.  
In Section \ref{sdim}, we consider the matching of modules over the algebras of certain pairs of arithmetic groups. 
We start with the Steinberg representations and give the explicit formulas for the dimensions.

\section{Trace formulas of $\GL(2)$ and the correspondence for $S$-adelic groups}\label{spreJL}

Here we have a brief review of the Jacquet-Langlands correspondence of $\GL(2)$ over a number field. 
The matching of certain terms of the trace formulas is also needed for the proof of the  results in the following sections. 
We fix the following notations.
\begin{itemize}
    \item $F$ is a totally real number field;
    \item $V,V_{\infty},V_f$ are the set of places, infinite places and finite places of $F$ respectively and $F_v$ is the local field at $v$;
    \item $D$ is a quaternion algebra over $F$;
    \item $S$ is the ramification set of $D$, i.e., $D^{\times}(F_v)\cong \GL(2,F_v)$ if and only if $v\in S$;
    \item $G=\GL(2)$ and $G'=D^{\times}$ with the isomorphic centers $Z\cong Z'$;
    \item $\overline{G}=Z\backslash G=\PGL(2)$ and $\overline{G'}=Z'\backslash D^{\times}=\PD^{\times}$
\end{itemize}
It is known that $S$ is of even cardinality \cite[Theorem 7.3.6]{McRd219}. 
Now we fix a character $\omega$ of $F^{\times}\backslash \mathbb{A}^{\times}$ and let
\begin{itemize}
\item $\Pi(G'_v)=$ the set of equivalence classes of irreducible admissible representations of $G'_v$;
\item $\Pi^2(G_v)=$ the set of classes of square-integrable representations\footnote{Here we usually consider the representations of a topological group $G$ whose matrix coefficents are in $L^2(G/Z(G))$ if the center $Z(G)$ is not compact. 
If $Z(G)$ is not compact, there are no square-integrable representations in $L^(G)$. 
If $(\pi,H)$ is such a representation, we take nonzero $u,v\in H\subset L^2(G)$ and obtain $\langle u,v\rangle=\int_G u(g)\overline{v(g)}dg=\int_{G/ Z(G)}u(g)\overline{v(g)}\left(\int_{Z(G)}1dz\right)dg$, where the inner integral diverges.}  of $G_v$;

\item $\mathcal{A}^S_0(G)=$ the set of classes of irreducible representation $\pi=\bigotimes_{v\in V} \pi_v$ of $G(\mathbb{A})$ in $L_{\rm{cusp}}^2(Z(\mathbb{A})G(F)\backslash G(\mathbb{A}),\omega)$ such that $\pi_v$ is a discrete series for each $v\in S$; 
\item $\mathcal{A}_0(G')=$ the set of classes of irreducible representation $\pi'=\bigotimes_{v\in V} \pi'_v$ of $D^{\times}(\mathbb{A})$ in $L^2(Z(\mathbb{A})D^{\times}(F)\backslash D^{\times}(\mathbb{A}),\omega)$ which are not one-dimensional. 
\end{itemize}
Recall that for regular semisimple elements $t,t'$ in $G_v,G'_v$, we write $t\sim t'$ if they have the same eigenvalues in $F_v$. 
For hyperbolic $t\in G_v$, there is no $t'\in G'_v$ such that $t\sim t'$. 

Following \cite[\S 16]{JL70} and \cite[\S 8]{GJ79}, we state the local and global Jacquet-Langlands correspondences for $\GL(2)$ as follows, where $\theta_{\pi}$ denotes the distribution character of $\pi$ (see \cite{Delorme96}). 
\begin{theorem}\label{tJL}
\begin{enumerate}
    \item There is a bijection $\JL_v\colon \Pi(G'_v)\to \Pi^2(G_v)$ such that
    \begin{center}
        $\theta_{\pi'_v}(t')=-\theta_{\pi_v}(t)$, 
    \end{center} 
    if $\JL_v(\pi'_v)=\pi_v$ and $t'\sim t$. 
    \item There is a bijection $\JL\colon \mathcal{A}_0(G')\to \mathcal{A}^S_0(G)$ such that if $\pi'=\bigotimes\limits_{v\in V}\pi'_v$ $\mapsto$ $\pi=\bigotimes\limits_{v\in V} \pi_v$, 
then $\JL_v(\pi'_v)=\pi_v$ for $v\in S$ and $\pi'_v\cong \pi_v$ for $v\notin S$. 
\end{enumerate}
\end{theorem}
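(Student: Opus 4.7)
The plan is to attack both sides with the Atiyah--Schmid formula (\ref{eintroAS}), use the local Jacquet--Langlands proportionality of formal degree to dimension to reduce to a representation-independent identity, verify this identity on the Steinberg--trivial matched pair, and finally evaluate the remaining purely geometric constant as $|\zeta_D(0)/\zeta_F(0)|$ via Tamagawa and mass-formula computations.

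Since $D$ ramifies at every real place, $D^{\times}(F_v)/Z(F_v)$ is compact at every $v\in S$, so the ambient product $\prod_{v\in S}D^1(F_v)$ is compact and its discrete subgroup $D^1(\mathcal{O}_S)$ is finite, while $\SL(2,\mathcal{O}_S)$ is an honest lattice in the non-compact $G_S:=\prod_{v\in S}\SL(2,F_v)$. Applying (\ref{eintroAS}) on each side gives
\begin{equation*}
\dim_{\mathcal{L}(\SL(2,\mathcal{O}_S))}\pi_S=\vol\bigl(\SL(2,\mathcal{O}_S)\backslash G_S\bigr)\cdot\prod_{v\in S}d(\pi_v),
\end{equation*}
and, in the finite-group case where $\mathcal{L}(D^1(\mathcal{O}_S))=\mathbb{C}[D^1(\mathcal{O}_S)]$ with trace $\tau(x)=\langle x\delta_e,\delta_e\rangle$,
\begin{equation*}
\dim_{\mathcal{L}(D^1(\mathcal{O}_S))}\pi'_S=\frac{\dim_{\mathbb{C}}\pi'_S}{|D^1(\mathcal{O}_S)|}=\frac{\prod_{v\in S}\dim_{\mathbb{C}}\pi'_v}{|D^1(\mathcal{O}_S)|}.
\end{equation*}
Taking the quotient and cancelling $|D^1(\mathcal{O}_S)|$ against the explicit factor on the target's right-hand side, the theorem reduces to the purely geometric identity
\begin{equation*}
\vol\bigl(\SL(2,\mathcal{O}_S)\backslash G_S\bigr)\cdot\prod_{v\in S}\frac{d(\pi_v)}{\dim_{\mathbb{C}}\pi'_v}=\left|\frac{\zeta_D(0)}{\zeta_F(0)}\right|.
\end{equation*}

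By the local JL proportionality stated just after (\ref{eintrofdim2}), each ratio $d(\pi_v)/\dim_{\mathbb{C}}\pi'_v$ depends only on the choice of Haar measures on $G_v,G'_v$ and not on the particular matched pair, so it suffices to verify the reduced identity on one convenient global matched pair. I would choose the Steinberg--trivial pair $(\St_v,\mathbf{1}_v)$ at every $v\in S$, for which $\dim_{\mathbb{C}}\mathbf{1}_v=1$ and $d(\St_v)$ is a classical local zeta value, reducing the check to a closed-form expression for $\vol(\SL(2,\mathcal{O}_S)\backslash G_S)\cdot\prod_{v\in S}d(\St_v)$. Via strong approximation for $\SL(2)$ this covolume is $\vol(\SL(2,F)\backslash \SL(2,\mathbb{A}))=1$ (Weil) divided by the Euler product $\prod_{v\notin S}\vol(\SL(2,\mathcal{O}_v))$, and Harder's Gauss--Bonnet computation in the totally real case reorganizes the combined quantity --- together with the $d(\St_v)$ factors --- into $|\zeta_F(-1)|^{[F:\mathbb{Q}]}$ times elementary constants. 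A parallel computation on the $D^1$ side, using the Eichler mass formula for $|D^1(\mathcal{O}_S)|$ and the Euler factorization $\zeta_D(s)=\zeta_F(s)\zeta_F(s-1)\prod_{v\in S}(\text{ramification correction})$, packages the remaining local data into $|\zeta_D(-1)|$.

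The final step is to pass from values at $s=-1$ to values at $s=0$ via the functional equations of $\zeta_F$ and $\zeta_D$; the signs introduced by the archimedean $\Gamma_\mathbb{R}$-factors of the totally real field $F$ produce precisely the absolute value on the right-hand side. The main obstacle lies in this synthesis: aligning the discriminant and archimedean normalizations between Harder's and Eichler's formulas so that the final ratio reads as $|\zeta_D(0)/\zeta_F(0)|$ rather than $|\zeta_D(-1)/\zeta_F(-1)|$ requires careful tracking of the completed zeta functions and their functional equations $\widehat\zeta(s)=\widehat\zeta(1-s)$, a bookkeeping exercise where miscounts of powers of $2$ or of the discriminant would destroy the clean answer.
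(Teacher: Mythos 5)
There is a fundamental mismatch here: you have written a proof sketch for the wrong statement. Theorem \ref{tJL} asserts the \emph{existence} of the local bijection $\JL_v\colon \Pi(G'_v)\to \Pi^2(G_v)$ satisfying the character identity $\theta_{\pi'_v}(t')=-\theta_{\pi_v}(t)$, and of the global bijection $\JL\colon \mathcal{A}_0(G')\to \mathcal{A}_0^S(G)$ compatible with the local ones. Your proposal instead sketches the paper's \emph{main theorem} --- the evaluation of the ratio $\dim_{\mathcal{L}(\SL(2,\mathcal{O}_S))}\pi_S/\dim_{\mathbb{C}}\pi'_S$ as $|\zeta_D(0)/\zeta_F(0)|$ --- and in doing so it \emph{presupposes} Theorem \ref{tJL}: you invoke ``the local JL proportionality'' and ``one convenient global matched pair,'' both of which only make sense once the correspondence has already been constructed. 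Nothing in your argument produces the bijections or the sign $-1$ in the character relation, so as a proof of the stated theorem it is circular.

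The statement in question is the classical Jacquet--Langlands correspondence, which the paper does not reprove but cites from \cite[\S 16]{JL70} and \cite[\S 8]{GJ79}. Its actual proof goes through the comparison of the Selberg trace formulas for $\GL(2)$ and $D^{\times}$ against test functions with matching orbital integrals (the relation $\varphi\sim\varphi'$ defined in Section \ref{spreJL}), yielding the trace identity of Proposition \ref{pJL} and then, by linear independence of characters, the bijection and the local character identity. The material you do develop --- Atiyah--Schmid, the finite-group computation of $\dim_{\mathcal{L}(D^1(\mathcal{O}_S))}\pi'_S$, Prasad's covolume formula, and the functional equation bookkeeping --- is the right toolkit for Theorem \ref{tvndimquotient} and its corollary, and is broadly consistent with how the paper proves those (via Lemmas \ref{lmuPGLS} and \ref{ldSTS} and Proposition \ref{pvdimSt}), but it does not address the theorem you were asked to prove.
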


We describe the matching orbital integrals, which is necessary for both the proof of Theorem \ref{tJL} and the rest parts of this paper. 
Let $C^{\infty}_{\cpt}(G_v),C^{\infty}_{\cpt}(G'_v)$ be the complex algebra of smooth functions on $G_v,G'_v$ with compact supports. 
Let $\varphi_v\in C^{\infty}_{\cpt}(G_v)$ and $\varphi'_v\in C^{\infty}_{\cpt}(G'_v)$. 
For elliptic elements $t,t'$ in $G_v,G'_v$,  let $\overline{G}_v(t)$ and $\overline{G}'_v(t')$ be their centralizers and define 
\begin{center}
$\Phi(\varphi_v,t)=\int_{\overline{Z(t)}\backslash \overline{G}_v}\varphi_v(x^{-1}tx)dx$ and 
$\Phi(\varphi'_v,t')=\int_{\overline{Z(t')}\backslash \overline{G}'_v}\varphi'_v(x^{-1}t'x)dx$,
\end{center} 
where the Haar measures on the quotients $\overline{G}_v(t)\backslash \overline{G}_v$ and $\overline{G}'_v(t') \backslash \overline{G}'_v$ are compatibly normalized (see \cite[\S 8.B]{GJ79}). 
We say $\varphi_v$ and $\varphi'_v$ have  {\it matching orbital integrals} and write $\varphi_v\sim \varphi'_v$ if the following conditions are satisfied:
\begin{enumerate}
    \item If $v\notin S$, $\varphi_v=\varphi'_v$ via $G_v\cong G'_v$. 
    \item if $v\in S$, there are two cases: 
    \begin{enumerate}
        \item for elliptic elements $t\sim t'$ in $G_v,G_v'$ respectively, $\Phi(\varphi_v,t)=\Phi(\varphi'_v,t')$;
        \item for $t\in G_v$, $\Phi(\varphi_v,t)=0$.
    \end{enumerate} 
\end{enumerate}
We say $\varphi=\Pi_{v\in V}\varphi_v$ and $\varphi'=\Pi_{v\in V}\varphi'_v$ have (global) {\it matching orbit integral} and write $\varphi\sim \varphi'$ if $\varphi_v$ and $\varphi'_v$ have matching orbital integrals for every $v\in V$. 

Let $R_{\omega,0}$ be the representation of $G(\mathbb{A})$ on $L_{\rm{cusp}}^2(Z(\mathbb{A})G(F)\backslash G(\mathbb{A}),\omega)$ and $R'_{\omega,0}$ be the representation of $D^{\times}(\mathbb{A})$ on $L^2(Z(\mathbb{A})D^{\times}(F)\backslash D^{\times}(\mathbb{A}),\omega)$. 
Following \cite[(8.16)]{GJ79}, we have the following results. 
\begin{proposition}\label{pJL}
If $\varphi=\Pi_{v\in V}\varphi_v\sim \varphi'=\Pi_{v\in V}\varphi'_v$, we have
\begin{equation}
    \tr R_{\omega,0}(\phi)=\tr R'_{\omega,0}(\phi'),
\end{equation}
and 
\begin{equation}
    \vol(Z(\mathbb{A})G(F)\backslash G(\mathbb{A}))\phi(1)=\vol(Z(\mathbb{A})D^{\times}(F)\backslash D^{\times}(\mathbb{A}))\phi'(1).
\end{equation}
\end{proposition}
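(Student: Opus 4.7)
The plan is to apply the Selberg trace formula to both $R_{\omega,0}$ and $R'_{\omega,0}$ and equate their geometric sides term-by-term using the orbital-integral matching $\varphi \sim \varphi'$, from which the two displayed equalities are extracted. Since $D^{\times}$ is anisotropic modulo its center (because $S$ is nonempty), the $G'$-side of the trace formula has a purely discrete spectral expansion and a geometric side consisting only of central, elliptic, and identity summands. On the $\GL(2)$ side one must additionally account for hyperbolic and unipotent geometric terms, as well as the continuous (Eisenstein) and residual spectral contributions beyond $\tr R_{\omega,0}(\varphi)$.

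Concretely I would proceed in four steps. First, match the identity and central summands using the compatible normalization of Haar measures at each place; the identity summand alone yields the second displayed equation directly. Second, match the elliptic summands by pairing elliptic $F$-rational conjugacy classes in $\overline{G}$ with those in $\overline{G'}$ through their common eigenvalue data, and invoking condition 2(a) of the matching definition place-by-place. Third, apply condition 2(b) at any $v \in S$ --- the vanishing of hyperbolic orbital integrals at a ramified place --- to annihilate the entire hyperbolic contribution on the $\GL(2)$ side, since global hyperbolic orbital integrals factor as products over places and $S$ is nonempty. Fourth, observe that the one-dimensional automorphic characters of $G'(\mathbb{A})$ (excluded from $\mathcal{A}_0(G')$) appear in the residual spectrum on both sides and cancel against each other.

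The main obstacle will be showing that the remaining continuous and residual spectral contributions on the $\GL(2)$ side cancel the unipotent geometric contribution. This cancellation is not formal: it requires normalized intertwining operators, explicit unramified local computations of weighted orbital integrals, and the local Plancherel data at ramified places, all controlled by the vanishing built into condition 2(b). Rather than reproduce this delicate harmonic analysis, my proof would invoke the fine trace formula comparison of \cite[\S 16]{JL70} and \cite[\S 8]{GJ79} directly, so that only the bookkeeping of elliptic, central, and identity terms needs to be made explicit in the present write-up.
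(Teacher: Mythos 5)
Your overall route is the same as the paper's: the paper offers no argument beyond the citation to \cite[(8.16)]{GJ79}, and your write-up likewise ends by delegating the genuinely hard cancellation (continuous spectrum versus weighted/unipotent geometric terms) to \cite[\S 16]{JL70} and \cite[\S 8]{GJ79}. Your bookkeeping of the remaining terms --- anisotropy of $D^{\times}$ modulo its center, elliptic matching via condition 2(a), annihilation of the hyperbolic contribution via condition 2(b) at a single $v\in S$, and cancellation of the one-dimensional automorphic characters against the residual spectrum of $\GL(2)$ --- is the standard and correct outline.

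The one step you present as formal that is not is the first. The matching conditions as defined constrain $\varphi_v$ and $\varphi'_v$ only through orbital integrals at regular elliptic and hyperbolic elements; they say nothing a priori about the values $\varphi_v(z)$ at central $z$. So the equality of the identity summands is not a ``direct'' consequence of compatibly normalized Haar measures. It rests on the local limit formula relating $\varphi_v(z)$ to $\varphi'_v(z)$ for matched functions --- precisely the content of Lemma \ref{lStdimvol}, namely $\varphi_v(z)/d(\St_v)=-\varphi'_v(z)\vol(F_v^{\times}\backslash G'_v)$, obtained from the germ expansion of orbital integrals near the center --- together with the comparison of the two global volumes (both Tamagawa numbers equal $2$, cf.\ Remark \ref{rtama1}) and the evenness of $|S|$ to dispose of the sign $(-1)^{|S|}$. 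As written, your step one makes the second displayed equation a restatement of the claim rather than a proof of it; either route it through these three inputs explicitly, or extract it from the full trace identity by the separation-of-terms argument of \cite[\S 8]{GJ79}.
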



We let $G_S=\prod_{v\in S}G(F_v)$ and $G'_S=\prod_{v\in S}D^{\times}(F_v)$. 
Recall that $S$ is the ramification set of $D$, i.e., $D^{\times}(F_v)\ncong \GL(2,F_v)$ if and only if $v\in S$. 
Note $\overline{G'}_v$ is compact if $v\in S$.
Thus $\overline{G'}_S$ is compact.  

For $v\in V_f$, we let $\St_v$ be the Steinberg representation of $\GL(2,F_v)$. 
For $v\in V_{\infty}$, we let $\St_v$ be the square-integrable representation of minimal formal degree, i.e., $\St_v=\omega_v\otimes (\mathcal{D}_2^+\oplus \mathcal{D}_2^-)$ where $\mathcal{D}_n^{\pm}$ are the holomorphic/anti-holomorphic discrete series of $\SL(2,\mathbb{R})$ (see \cite{Kn77GL2}). 
We have the following result (see \cite[\S 8.2]{GJ79}, \cite[\S 6]{Lldsbase80} and \cite{Rog80}).

\begin{lemma}\label{lStdimvol} 
Given $\varphi'_v \in C_{\cpt}^{\infty}(G'_v,\omega_v)$, there exists $\varphi_v \in C_{\cpt}^{\infty}(G_v,\omega_v)$ such that $\varphi_v\sim \varphi'_v$. 
Moreover, for $z\in Z(F_v)$, we have $\frac{\varphi_v(z)}{d(\St_v)}=-\varphi'_v(z)\vol(F_v^{\times}\backslash G'_v)$. 
\end{lemma}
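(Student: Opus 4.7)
The existence of a matching $\varphi_v\in C_c^\infty(G_v,\omega_v)$ for $\varphi'_v\in C_c^\infty(G'_v,\omega_v)$ at a ramified place $v\in S$ is the local Jacquet--Langlands transfer of orbital integrals: one prescribes $\Phi(\varphi_v,t)=\Phi(\varphi'_v,t')$ for $t\sim t'$ regular elliptic and $\Phi(\varphi_v,t)=0$ for $t$ regular hyperbolic, and then shows that such a function actually lies in $C_c^\infty(G_v,\omega_v)$. This is carried out in \cite[\S 15]{JL70} for non-archimedean $v$, and in \cite[\S 8]{GJ79}, \cite{Lldsbase80}, \cite{Rog80} at the archimedean $v\in S$ using Harish-Chandra descent and pseudo-coefficients of the discrete series. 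I would invoke this transfer and focus on pinning down the explicit constant at central points.

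For the identity at $z\in Z(F_v)$, my plan is to approach $z$ along regular elliptic elements and take the limit of the matching identity. On the $D^\times$ side, $z$ is central and $\overline{G'_v}$ is compact, so the orbital integral simplifies immediately:
\[
\lim_{t'\to z}\Phi(\varphi'_v,t') \;=\; \varphi'_v(z)\cdot\vol(F_v^\times\backslash G'_v).
\]
On the $\GL(2)$ side, the Shalika germ expansion of $\Phi(\varphi_v,\cdot)$ at the central point $z$ expresses it as a linear combination of two germ functions associated to the zero and the regular unipotent orbits in $\mathfrak{sl}_2$, with coefficients $\varphi_v(z)$ and a regular unipotent orbital integral respectively. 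Since $\Phi(\varphi_v,\cdot)$ vanishes on the regular hyperbolic set, the compatibility of germs across the two Cartan subgroups of $\GL(2)$ forces the regular unipotent coefficient to vanish, leaving only the elliptic germ, which yields
\[
\lim_{t\to z}\Phi(\varphi_v,t) \;=\; -\frac{\varphi_v(z)}{d(\St_v)}.
\]
Matching the two limits produces the desired formula.

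The principal obstacle is to pin down the precise constant $-1/d(\St_v)$ in the elliptic germ. The minus sign is forced by the character identity $\theta_{\pi'_v}(t')=-\theta_{\pi_v}(t)$ of Theorem \ref{tJL}; the factor $d(\St_v)$ arises because, among square-integrable representations of $\GL(2,F_v)$, only (twists of) the Steinberg has nonzero Jacquet--Langlands transfer to the trivial representation of the compact group $\overline{G'_v}$, so Plancherel inversion applied to the cuspidal test function $\varphi_v$ isolates this one term with weight $d(\St_v)$. For $\GL(2)$ this calculation is essentially the one already carried out in \cite[\S 15]{JL70} and summarised in \cite[\S 8]{GJ79}, so the remaining work is to align normalisations of Haar measures on centralizers so that the formula appears with the stated volume factor $\vol(F_v^\times\backslash G'_v)$.
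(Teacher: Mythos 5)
The paper does not actually prove Lemma \ref{lStdimvol}; it cites \cite[\S 8.2]{GJ79}, \cite[\S 6]{Lldsbase80} and \cite{Rog80}, so your task is to reconstruct the argument behind those citations. Your outline is the right shape for the non-archimedean places in $S$: transfer by prescribing orbital integrals, then read off the central value from the germ expansion, using that the zero-orbit germ is supported on the elliptic set (so the vanishing of the split orbital integrals kills the regular-unipotent coefficient without forcing $\varphi_v(z)=0$) and that its value on elliptic elements is $-1/d(\St_v)$ by Rogawski's computation. Two points, however, need repair. First, on the $D^\times$ side the limit of $\Phi(\varphi'_v,t')$ as $t'\to z$ along a maximal torus $T'$ is $\varphi'_v(z)\cdot\vol(\overline{T'}\backslash\overline{G'_v})$, not $\varphi'_v(z)\cdot\vol(F_v^{\times}\backslash G'_v)$; these agree only under the normalization $\vol(\overline{T'})=1$, and since the answer must be independent of which elliptic torus you approach $z$ through, this normalization has to be imposed coherently on both sides (it is exactly the ``compatible normalization'' of \cite[\S 8.B]{GJ79}). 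You acknowledge this but leave it as the load-bearing unresolved step.

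Second, and more seriously, your germ-limit argument is a $p$-adic argument, while in this paper $S$ necessarily contains every archimedean place ($F$ is totally real and $D$ ramifies at each real place), so the case $G_v=\GL(2,\mathbb{R})$ is not a corner case. There the unnormalized elliptic orbital integral generically blows up like $|D(t)|^{-1/2}$ as $t\to z$, and the central value is recovered from Harish-Chandra's limit formula, i.e.\ from the jump of the \emph{derivative} of the normalized orbital integral $F^B_{\varphi_v}$ at $z$, not from a value of a germ; the fact that $\lim_{t\to z}\Phi(\varphi_v,t)$ exists at all for your transferred $\varphi_v$ is a consequence of the matching (it equals the convergent quaternionic integral), not an input. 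A route that treats all $v\in S$ uniformly, and is closer to what \cite{GJ79} actually does, is Fourier inversion at $z$: expand $\varphi_v(z)$ by the Plancherel formula, note that the principal-series contributions vanish because the split orbital integrals of $\varphi_v$ vanish (Weyl integration), apply the character identity $\theta_{\pi_v}=-\theta_{\pi'_v}$ of Theorem \ref{tJL} together with $d(\pi_v)=d(\St_v)\dim\pi'_v$ (Lemma \ref{ldgdim}), and compare with the Peter--Weyl expansion of $\varphi'_v(z)$ on the compact group $\overline{G'_v}$, which produces exactly the factor $\vol(F_v^{\times}\backslash G'_v)$. As written, your proof is incomplete at the real places.
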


Consider a square-integrable representation $\pi_v$ of $\GL(2,F_v)$ and an irreducible representation $\pi'_v$ of $D^{\times}(F_v)$ which are matched by the local Jacquet-Langlands correspondence, i.e., $\JL_v(\pi'_v)=\pi_v$. 
The formal degrees $d(\pi_v)$ are proportional in the following sense. 
\begin{lemma}\label{ldgdim}
For each $v$, if $\JL_v(\pi'_v)=\pi_v$, then $\frac{d(\pi_v)}{d(\St_v)}=\dim_{\mathbb{C}}\pi'_v$. 
\end{lemma}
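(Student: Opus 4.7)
The plan is to compare two evaluations of $\varphi_v(1)$ for a carefully chosen matched pair $\varphi_v\sim\varphi'_v$: one from the central-value identity of Lemma~\ref{lStdimvol}, the other from the Harish-Chandra Plancherel formula on $G_v$. Since the hypothesis involves $\JL_v$, we are in the setting $v\in S$, so $F_v^{\times}\backslash G'_v$ is compact and $\pi'_v$ is finite dimensional. I would take
$$\varphi'_v(g)=\frac{\dim\pi'_v}{\vol(F_v^{\times}\backslash G'_v)}\,\overline{\chi_{\pi'_v}(g)}\in C^{\infty}_{\cpt}(G'_v,\omega_v).$$
By Schur orthogonality on the compact group $F_v^{\times}\backslash G'_v$, the operator $\pi'_v(\varphi'_v)$ acts as the identity on the space of $\pi'_v$ and annihilates every inequivalent irreducible, so $\tr\pi'_v(\varphi'_v)=\dim\pi'_v$ and $\tr\sigma'_v(\varphi'_v)=0$ for $\sigma'_v\not\cong\pi'_v$. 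Evaluating at the identity gives $\varphi'_v(1)=(\dim\pi'_v)^2/\vol(F_v^{\times}\backslash G'_v)$, and Lemma~\ref{lStdimvol} supplies a matching $\varphi_v\in C^{\infty}_{\cpt}(G_v,\omega_v)$ with
$$\varphi_v(1)=-d(\St_v)\,\varphi'_v(1)\,\vol(F_v^{\times}\backslash G'_v)=-d(\St_v)\,(\dim\pi'_v)^2.$$

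Next, I would combine the local character identity of Theorem~\ref{tJL}(1) with the Weyl integration formula. Because $\Phi(\varphi_v,t)=0$ on hyperbolic $t$, the integral $\int_{G_v}\varphi_v\theta_{\sigma_v}$ collapses to the elliptic Cartans, where the orbital integrals of $\varphi_v$ and $\varphi'_v$ agree and the characters differ by a sign; this yields $\tr\sigma_v(\varphi_v)=-\tr\sigma'_v(\varphi'_v)$ for every matched pair $\sigma_v=\JL_v(\sigma'_v)$. Specialising gives $\tr\pi_v(\varphi_v)=-\dim\pi'_v$, while the trace vanishes for every other square-integrable $\sigma_v$. Moreover $\varphi_v$ is cuspidal (its hyperbolic orbital integrals vanish), so in the Harish-Chandra Plancherel formula at $1$ the tempered non-discrete contribution drops out, leaving
$$\varphi_v(1)=\sum_{\sigma_v\in\Pi^2(G_v)}d(\sigma_v)\,\tr\sigma_v(\varphi_v)=-d(\pi_v)\dim\pi'_v.$$
Equating with the previous expression yields $d(\pi_v)/d(\St_v)=\dim\pi'_v$.

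The main obstacle is the reduction of the Plancherel formula at $1$ to a clean sum over the discrete spectrum: one has to couple the cuspidality of the matched $\varphi_v$ (which eliminates the continuous-spectrum contribution) with the Schur orthogonality on the compact side (which eliminates all square-integrable summands other than $\pi_v$). Once these two ingredients are in place, the qualitative character identity of Theorem~\ref{tJL}(1) is converted into the asserted quantitative relation between formal degree and dimension.
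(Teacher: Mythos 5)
Your argument is correct, but it follows a genuinely different route from the one written in the paper. The paper disposes of the non-archimedean case by citing Rogawski's result for $\GL(n)$ over $p$-adic fields and then handles $\GL(2,\mathbb{R})$ by an explicit computation: the formal degree of $H_n=\mathcal{D}_n^+\oplus\mathcal{D}_n^-$ is $c(n-1)$, while the matching representation of $D^{\times}(\mathbb{R})\cong\mathbb{R}^{\times}_{>0}\times\SU(2)$ is the $(n-1)$-dimensional one, whence the ratio $n-1$. You instead give a uniform, place-independent proof: transferring the normalized conjugate character $\varphi'_v=\frac{\dim\pi'_v}{\vol(F_v^{\times}\backslash G'_v)}\overline{\chi_{\pi'_v}}$ produces (up to sign and scale) a pseudo-coefficient of $\pi_v$; Schur orthogonality on the compact side, the character identity of Theorem \ref{tJL}(1) fed through the Weyl integration formula, the central-value relation of Lemma \ref{lStdimvol}, and the Plancherel formula at the identity then combine to give $-d(\St_v)(\dim\pi'_v)^2=\varphi_v(1)=-d(\pi_v)\dim\pi'_v$. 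This is in substance the classical proof (it is essentially how the cited result of Rogawski is established), so one could say you are reproving the black box rather than avoiding it; what it buys is that the archimedean and non-archimedean cases are treated identically and no explicit formal-degree formulas are needed. The price is two nontrivial analytic inputs that you correctly flag but should not understate: the Harish--Chandra Plancherel formula for $G_v$ with fixed central character, and the fact that the continuous (principal-series) contribution vanishes for a function whose hyperbolic orbital integrals vanish --- at a real place this uses that principal-series characters are supported on the split set and that limits of discrete series carry zero Plancherel mass. Both are standard, and the signs and measure normalizations in your computation are consistent with the conventions of Lemma \ref{lStdimvol} and the matching condition, so the proof goes through.
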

\begin{proof}
This result is proved for $\GL(n)$ over $p$-adic field by J. Rogawski \cite{Rgw83}. 
It remains to prove for $\GL(2,\mathbb{R})$. 
We let $H_n$ be $\mathcal{D}_n^+\oplus \mathcal{D}_n^-$ (up to a central character). 
It is known that the formal degree of $H_n$ is always $c(n-1)$ for some constant $c>0$ that is uniquely determined by the Haar measure we choose on $\GL(2,\mathbb{R})$. 
On the other hand, as $D^\times(\mathbb{R})\cong \mathbb{R}^{\times}_{>0}\times \SU(2)$,  $H_n$ corresponds to $\mathbb{C}^{n-1}$, the $(n-1)$-dimensional representation of $\SU(2)$ (see \cite[\S 1]{KnRg97tfJL}). 
Thus we have $\frac{d(H_n)}{d(\St_{\infty})}=n-1$. 
\end{proof}

For $\pi=\otimes_{v\in V} \pi_v$ and $\pi'=\otimes_{v\in V} \pi'_v$, we let $\pi_S=\otimes_{v\in V} \pi_v$ and $\pi'_S=\otimes_{v\in V} \pi'_v$ be the representation of $G_S$ and $G'_S$. 

\begin{proposition}\label{pSfd}
If $\JL(\pi')=\pi$, we have
\begin{center}
    $\vol(\overline{G}(F)\backslash \overline{G}(\mathbb{A}_F))d(\pi_S)=\vol(\overline{G'}(F)\backslash \overline{G'}(\mathbb{A}_F))d(\pi'_S)$. 
\end{center}
\end{proposition}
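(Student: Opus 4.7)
The plan is to combine Proposition~\ref{pJL} (which gives the identity $\vol(Z(\mathbb{A})G(F)\backslash G(\mathbb{A}))\,\varphi(1)=\vol(Z(\mathbb{A})D^{\times}(F)\backslash D^{\times}(\mathbb{A}))\,\varphi'(1)$ for matching $\varphi \sim \varphi'$) with the local value-at-identity matching of Lemma~\ref{lStdimvol} and the proportionality of formal degrees from Lemma~\ref{ldgdim}. The identification of the central quotient volumes $\vol(Z(\mathbb{A})G(F)\backslash G(\mathbb{A}))=\vol(\overline{G}(F)\backslash\overline{G}(\mathbb{A}_F))$ and similarly on the $G'$ side is built in because $Z \cong Z'$ and the measures are normalized compatibly.

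The first step is to choose, for each $v \in S$, an arbitrary $\varphi'_v \in C^\infty_{\cpt}(G'_v,\omega_v)$ with $\varphi'_v(1) \neq 0$; at each $v \notin S$, set $\varphi_v = \varphi'_v$ via $G_v \cong G'_v$. Lemma~\ref{lStdimvol} produces $\varphi_v \sim \varphi'_v$ at every $v \in S$ satisfying
\[
\varphi_v(1) \;=\; -\,d(\St_v)\,\vol(F_v^{\times}\backslash G'_v)\,\varphi'_v(1).
\]
Taking the product over $v \in S$ and using that $|S|$ is even kills the sign, giving
\[
\varphi(1) \;=\; \Bigl(\prod_{v\in S} d(\St_v)\Bigr) \Bigl(\prod_{v\in S}\vol(\overline{G'}_v)\Bigr)\,\varphi'(1).
\]
Feeding this into Proposition~\ref{pJL} and using the freedom in $\varphi'(1)\neq 0$ yields the purely volumetric identity
\[
\vol(\overline{G}(F)\backslash\overline{G}(\mathbb{A}_F))\Bigl(\prod_{v\in S} d(\St_v)\Bigr)\Bigl(\prod_{v\in S}\vol(\overline{G'}_v)\Bigr)
\;=\; \vol(\overline{G'}(F)\backslash\overline{G'}(\mathbb{A}_F)).
\]

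The second step converts this volume identity into the desired formal-degree identity. Since $D$ ramifies at every $v \in S$, the group $\overline{G'}_v$ is compact, so an irreducible $\pi'_v$ is finite-dimensional with formal degree $d(\pi'_v) = \dim_{\mathbb{C}}\pi'_v/\vol(\overline{G'}_v)$; hence
\[
\prod_{v\in S}\vol(\overline{G'}_v) \;=\; \frac{\prod_{v\in S}\dim_{\mathbb{C}}\pi'_v}{d(\pi'_S)}.
\]
On the $\GL(2)$ side, Lemma~\ref{ldgdim} gives $d(\pi_v) = d(\St_v)\,\dim_{\mathbb{C}}\pi'_v$ for each $v \in S$, so
\[
\Bigl(\prod_{v\in S}d(\St_v)\Bigr)\Bigl(\prod_{v\in S}\dim_{\mathbb{C}}\pi'_v\Bigr) \;=\; \prod_{v\in S}d(\pi_v) \;=\; d(\pi_S).
\]
Substituting these into the volume identity above and cancelling $\prod_{v\in S}\vol(\overline{G'}_v)$ on both sides produces the claimed equality. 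The only mildly delicate point is bookkeeping of Haar measures (the normalizations linking $Z$, $G$, $\overline{G}$ at each place, and the factorization of the global measure through the local ones); once those are fixed as in Lemma~\ref{lStdimvol} and \cite[\S 8]{GJ79}, the rest is just algebraic manipulation.
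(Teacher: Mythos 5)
Your proof is correct and follows essentially the same route as the paper: derive the volume identity $\vol(\overline{G}(F)\backslash\overline{G}(\mathbb{A}_F))\prod_{v\in S}d(\St_v)=\vol(\overline{G'}(F)\backslash\overline{G'}(\mathbb{A}_F))\prod_{v\in S}\vol(\overline{G'}_v)^{-1}$ from Proposition~\ref{pJL} and Lemma~\ref{lStdimvol}, then convert it using $\dim_{\mathbb{C}}\pi'_v=\vol(\overline{G'}_v)\,d(\pi'_v)$ and Lemma~\ref{ldgdim}. You actually supply more detail than the paper does on the first step (the choice of test functions and the cancellation of the sign $(-1)^{|S|}$ via the evenness of $|S|$), and the only nitpick is that you should also insist $\varphi'_v(1)\neq 0$ at the places $v\notin S$ so that the global value $\varphi'(1)$ can be divided out.
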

\begin{proof}
By Proposition \ref{pJL} and Lemma \ref{lStdimvol}, we have
\begin{equation*}
    \vol(\overline{G}(F)\backslash \overline{G}(\mathbb{A}_F))\prod_{v\in S}d(\St_v)=\vol(\overline{G'}(F)\backslash \overline{G'}(\mathbb{A}_F))\prod_{v\in S}\vol(\overline{G'}_v)^{-1}. 
\end{equation*}
Observe $\dim \pi'_v=\vol(\overline{G'}_v)d(\pi'_v)$ (which can also be shown by Proposition \ref{pas} where we take $\Gamma$ to be trivial). 
We apply Lemma \ref{ldgdim} and obtain
\begin{equation*}
\begin{aligned}
&\vol(\overline{G}(F)\backslash \overline{G}(\mathbb{A}_F))d(\pi_S)\\=&
    \vol(\overline{G}(F)\backslash \overline{G}(\mathbb{A}_F))\prod_{v\in S}d(\pi_v)\\
    =&\vol(\overline{G}(F)\backslash \overline{G}(\mathbb{A}_F))\prod_{v\in S}(d(\St_v)\dim \pi'_v)\\
    =&\vol(\overline{G'}(F)\backslash \overline{G'}(\mathbb{A}_F))\prod_{v\in S}(\vol(\overline{G'}_v)^{-1}\dim \pi'_v)\\
    =&\vol(\overline{G'}(F)\backslash \overline{G'}(\mathbb{A}_F))\prod_{v\in S}d(\pi'_v)
    =\vol(\overline{G'}(F)\backslash \overline{G'}(\mathbb{A}_F))d(\pi'_S). 
\end{aligned}
\end{equation*}
\end{proof}
\begin{remark}\label{rtama1}
The two volumes $\vol(\overline{G}(F)\backslash \overline{G}(\mathbb{A}_F))$ and $\vol(\overline{G'}(F)\backslash \overline{G'}(\mathbb{A}_F))$ are both $2$ in Proposition \ref{pSfd} if we take the Tamagawa measures (see \cite[\S 3]{Vign80} and \cite{Kott88}). 
This fact is not needed here. 
The normalizations of local and global Haar measures are discussed in the following sections. 
\end{remark}

\section{Modules over twisted group von Neumann algebras}\label{svna}

Let $\Gamma$ be a countable discrete group.
Denote by $l^2(\Gamma)$ the Hilbert space of  square-integrable functions on $\Gamma$ with the canonical orthonormal basis $\{\delta_{\gamma}|\gamma\in \Gamma\}$.  
We define
\begin{enumerate}
    \item a {\it $2$-cocyle} on $\Gamma$ with values in $\mathbb{T}=\{z\in \mathbb{C}| |z|=1\}$ is a map $\sigma\colon \Gamma\times \Gamma \to \mathbb{T}$ such that $\sigma(\alpha,\beta)\sigma(\alpha\beta,\gamma)=\sigma(\beta,\gamma)\sigma(\alpha,\beta\gamma)$ and $\sigma(\alpha,e)=\sigma(e,\alpha)=1$ for $\alpha,\beta,\gamma\in \Gamma$;
    \item a {\it $\sigma$-projective representation} $\pi$ on a Hilbert space $H$ is a map $\pi\colon \Gamma\to U(H)$ such that $U(\alpha)U(\beta)=\sigma(\alpha,\beta)U(\alpha\beta)$ for $\alpha,\beta\in \Gamma$;
    \item the {\it $\sigma$-projective right regular representation} is the representation $\rho_{\sigma}$ on $l^2(\Gamma)$ given by $\rho_{\sigma}(\gamma)f(x)=\sigma(x,\gamma)f(x\gamma)$ for all $\gamma \in \Gamma$ and $f\in L^2(\Gamma)$;
    \item the {\it $\sigma$-twisted right group von Neumann algebra} $\mathcal{R}(\Gamma,\sigma)$ is the weak operator closed algebra generated by $\{\rho_{\sigma}(\gamma)|\gamma\in \Gamma\}$. 
\end{enumerate}
Note the {\it $\sigma$-projective left regular representation} $\lambda_{\sigma}$ and the {\it $\sigma$-twisted left group von Neumann algebra} $\mathcal{L}(\Gamma,\sigma)$ can be defined in a similar and obvious way. 
It is known that $\mathcal{R}(\Gamma,\overline{\sigma})$ is the commutant of $\mathcal{L}(\Gamma,\sigma)$ on $l^2(\Gamma)$, where $\overline{\sigma}$ denotes the complex conjugate of $\sigma$ (see \cite[\S 1]{Kleppner62}). 
If $\sigma$ is trivial, all these twisted and projective objects are reduced to the ordinary ones, which are usually denoted by $\mathcal{L}(\Gamma)$ and $\mathcal{R}(\Gamma)$. 

There is a natural trace $\tau\colon \mathcal{L}(\Gamma,\sigma)\to \mathbb{C}$ given by $\tau(x)=\langle x\delta_e,\delta_e\rangle_{l^2(\Gamma)}$. 
It gives an inner product on $\mathcal{L}(\Gamma,\sigma)$ defined by $\langle x,y \rangle_{\tau}=\tau(xy^*)$ for $x,y\in \mathcal{L}(\Gamma,\sigma)$. 
The completion of $\mathcal{L}(\Gamma,\sigma)$ with respect to this inner product is exactly $l^2(\Gamma)$. 

Consider a normal unital representation $\pi\colon \mathcal{L}(\Gamma,\sigma) \to B(H)$. 
It is known that there is a $\mathcal{L}(\Gamma,\sigma)$-equivariant isometry 
\begin{center}
    $u\colon H\to l^{2}(\Gamma)\times l^2(\mathbb{N})$
\end{center}
of $\mathcal{L}(\Gamma,\sigma)$-modules (see \cite[\S 8.2]{AnaPopa17}). 
Hence $uu^*$ is in the commutant, i.e.,
\begin{center}
    $uu^*\in \mathcal{L}(\Gamma,\sigma)'\cap B(l^{2}(\Gamma)\times l^2(\mathbb{N}))\cong \mathcal{R}(\Gamma,\overline{\sigma})\otimes B(l^2(\mathbb{N}))$. 
\end{center}
Recall that we have a trace $\tau\otimes\tr$ on the right-hand side. 
We define the {\it von Neumann dimension} of $H$ over $\mathcal{L}(\Gamma,\sigma)$ by
\begin{center}
    $\dim_{\mathcal{L}(\Gamma,\sigma)}H:=(\tau\otimes\tr)(uu^*)$,
\end{center}
which does not depend on the choice of isometry $u$. 
Recall that a group $\Gamma$ is called of infinite conjugacy class (or simply ICC) if the conjugacy class $C_\gamma =\{\alpha\gamma\alpha^{}-1|\alpha\in \Gamma\}$ is infinite for $\gamma\neq e$. 
The following results are well-known (see \cite{JS97}).
\begin{proposition}\label{pvdim}
\begin{enumerate}
\item $\dim_{\mathcal{L}(\Gamma,\sigma)}H_1\oplus H_2=\dim_{\mathcal{L}(\Gamma,\sigma)}H_1+\dim_{\mathcal{L}(\Gamma,\sigma)}H_2$;
    \item If $\Gamma$ is ICC, $\dim_{\mathcal{L}(\Gamma,\sigma)}H_1\cong \dim_{\mathcal{L}(\Gamma,\sigma)}H_2$ if and only if $H_1\cong H_2$ as $\mathcal{L}(\Gamma,\sigma)$-modules;
    \item If $\Gamma$ is finite, $\dim_{\mathcal{L}(\Gamma,\sigma)}H=\frac{\dim_{\mathbb{C}} H}{|\Gamma|}$. 
\end{enumerate}

\end{proposition}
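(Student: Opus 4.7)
The plan is to treat each part by studying the range projection $p=uu^{*}\in \mathcal{R}(\Gamma,\overline{\sigma})\otimes B(l^2(\mathbb{N}))$ of an equivariant isometric embedding $u\colon H\hookrightarrow l^2(\Gamma)\otimes l^2(\mathbb{N})$, since by construction $\dim_{\mathcal{L}(\Gamma,\sigma)}H=(\tau\otimes\tr)(p)$. I expect (1) and (3) to be formal consequences of this setup, with (2) being the only step requiring real input.

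For additivity (1), I would build an equivariant embedding of $H_1\oplus H_2$ from the $u_i$ by choosing an orthogonal decomposition $l^2(\mathbb{N})=l^2(\mathbb{N})\oplus l^2(\mathbb{N})$; the two ranges become orthogonal, so $uu^{*}=u_1u_1^{*}+u_2u_2^{*}$ and $\tau\otimes\tr$ is additive on orthogonal projections. For the finite case (3), the strategy is to identify $\tau$ on $\mathcal{R}(\Gamma,\overline{\sigma})$ with a normalized matrix trace: a one-line check on the basis $\{\rho_{\sigma}(\gamma)\}$ shows $\tau=\tfrac{1}{|\Gamma|}\Tr_{B(l^2(\Gamma))}$ there, so $(\tau\otimes\tr)(p)$ equals $\tfrac{1}{|\Gamma|}$ times the ordinary rank of $p$, which is $\dim_{\mathbb{C}}H$ since $u$ is an isometry.

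For the ICC part (2), the forward direction $H_1\cong H_2 \Rightarrow \dim H_1=\dim H_2$ is immediate: an $\mathcal{L}(\Gamma,\sigma)$-equivariant unitary transports $u_1u_1^{*}$ to a Murray--von Neumann equivalent projection with the same $(\tau\otimes\tr)$-trace. For the reverse, the plan is to invoke comparison of projections in a type II$_\infty$ factor: under the ICC hypothesis, $\mathcal{L}(\Gamma,\sigma)$ is a type II$_1$ factor, hence $\mathcal{R}(\Gamma,\overline{\sigma})\otimes B(l^2(\mathbb{N}))$ is type II$_\infty$, in which two projections with the same semifinite trace are equivalent. Pulling this equivalence back through $u_1,u_2$ produces an equivariant unitary $H_1\xrightarrow{\sim}H_2$.

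The main obstacle lies in (2): establishing that $\mathcal{L}(\Gamma,\sigma)$ is actually a factor under the ICC assumption in the twisted setting. For trivial $\sigma$ this is the classical result that the centre of $\mathcal{L}(\Gamma)$ is spanned by averages over finite conjugacy classes, hence reduces to $\mathbb{C}\cdot 1$ exactly when $\Gamma$ is ICC. For nontrivial $\sigma$ the same argument carries over by restricting to $\sigma$-regular conjugacy classes, and this refinement is the only nonformal input and is the step I would cite from \cite{JS97}. Once the factor property is granted, the rest of (2) reduces to the standard Murray--von Neumann comparison in a II$_\infty$ factor, exactly parallel to the untwisted case.
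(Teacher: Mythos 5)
Your proposal is correct. Note, however, that the paper does not actually prove this proposition --- it records all three parts as well known and cites \cite{JS97} --- so there is no internal argument to compare against; what you have written is precisely the standard proof that such a citation stands in for. Parts (1) and (3) are handled exactly as one should: additivity via an orthogonal direct sum of the two embeddings inside $l^2(\Gamma)\otimes l^2(\mathbb{N})$, and the finite case via the identity $\tau=\frac{1}{|\Gamma|}\Tr_{B(l^2(\Gamma))}$ on $\mathcal{R}(\Gamma,\overline{\sigma})$, which indeed checks in one line on the basis $\{\rho_{\sigma}(\gamma)\}$ since $\langle\rho_\sigma(\gamma)\delta_x,\delta_x\rangle=0$ for all $x$ unless $\gamma=e$. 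For (2) you correctly isolate the only nonformal input: factoriality of $\mathcal{L}(\Gamma,\sigma)$ for ICC $\Gamma$, which follows from Kleppner's criterion (a twisted algebra is a factor iff every nontrivial $\sigma$-regular conjugacy class is infinite, and ICC makes every nontrivial class infinite). Two small points you should make explicit to close the argument: when $\dim_{\mathcal{L}(\Gamma,\sigma)}H_1=\dim_{\mathcal{L}(\Gamma,\sigma)}H_2=\infty$ the comparison of projections uses that the II$_\infty$ factor $\mathcal{R}(\Gamma,\overline{\sigma})\otimes B(l^2(\mathbb{N}))$ is countably decomposable (so any two infinite projections are equivalent); and the partial isometry witnessing Murray--von Neumann equivalence of $u_1u_1^*$ and $u_2u_2^*$ lies in the commutant of $\mathcal{L}(\Gamma,\sigma)$, which is exactly why conjugating by $u_1,u_2$ yields an $\mathcal{L}(\Gamma,\sigma)$-linear unitary $H_1\to H_2$. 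With those remarks added, the proof is complete.
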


Suppose $\Gamma$ is a lattice in a unimodular group $G$, i.e., $\mu(\Gamma\backslash G)$ is finite for (any) Haar measure $\mu$ of $G$. 
Let $\rho_G$ be the right regular representation of $G$ on $L^2(G,\mu)$. 
Consider its restriction $\rho_G|_{\Gamma}$ on $\Gamma$ and its twisted representation $\rho_G|_{\Gamma,\sigma}$. 
We know that 
\begin{center}
    $L^2(G)\cong  l^2(\Gamma)\otimes L^2(\Gamma\backslash G)$
\end{center}
as $\mathcal{L}(\Gamma,\sigma)$ modules (see \cite[Proposition 4.1]{Enstad22}). 
Let $H$ be a square-integrable representation of $G$. 
There exists a left $G$-invariant map $p_H\colon L^2(G)\to H$. 
As $p_H$ commutes with the left action of $\Gamma$, we known $p\in \mathcal{R}(\Gamma,\overline{\sigma})\otimes B(l^2(\mathbb{N}))$ and it gives the von Neumann dimension $\dim_{\mathcal{L}(\Gamma,\sigma)}H=(\tau\otimes \tr)(p)$. 
It is related to the formal degree $d(\pi)$ of $H$ as follows. 
\begin{proposition}\label{pas}
Let $G$ be a unimodular group and $\Gamma$ be a lattice in $G$. 
Suppose $\pi$ is a ordinary (or $\sigma$-projective) square-integrable representation of $G$. 
We have
\begin{center}
    $\dim_{\mathcal{L}(\Gamma,\sigma)}\pi=\mu(\Gamma\backslash G)\cdot d(\pi)$.
\end{center}
\end{proposition}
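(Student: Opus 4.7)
The plan is to realize $H$ as a subspace of $L^2(G)$ via matrix coefficients, following the classical argument of Atiyah--Schmid adapted to the $\sigma$-twisted setting. Fix a unit vector $\xi\in H$ and consider the matrix coefficient map $c_\xi\colon H\to L^2(G)$ given by $c_\xi(\eta)(g)=\sqrt{d(\pi)}\langle \eta,\pi(g)\xi\rangle$. The Schur orthogonality relations for a (possibly $\sigma$-projective) square-integrable representation
\begin{equation*}
\int_G\langle\pi(g)\xi_1,\eta_1\rangle\overline{\langle\pi(g)\xi_2,\eta_2\rangle}\,d\mu(g)=\frac{1}{d(\pi)}\langle\xi_1,\xi_2\rangle\overline{\langle\eta_1,\eta_2\rangle}
\end{equation*}
force $c_\xi$ to be an isometric embedding. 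A direct calculation shows $c_\xi$ intertwines $\pi$ on $H$ with the appropriately $\sigma$-twisted left regular representation on $L^2(G)$, so the orthogonal projection $p:=c_\xi c_\xi^*$ commutes with the $\mathcal{L}(\Gamma,\sigma)$-action and lies in $\mathcal{R}(\Gamma,\overline{\sigma})\otimes B(L^2(\Gamma\backslash G))$; its image is isomorphic to $H$ as an $\mathcal{L}(\Gamma,\sigma)$-module, so $\dim_{\mathcal{L}(\Gamma,\sigma)}H=(\tau\otimes\tr)(p)$.

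Next I would compute $(\tau\otimes\tr)(p)$ directly. Choose a Borel fundamental domain $F\subset G$ for $\Gamma\backslash G$ and an orthonormal basis $\{f_i\}$ of $L^2(F)\cong L^2(\Gamma\backslash G)$, extended by zero to $G$. Under the decomposition $L^2(G)\cong l^2(\Gamma)\otimes L^2(\Gamma\backslash G)$, the $f_i$ correspond to the vectors $\delta_e\otimes f_i$, and hence
\begin{equation*}
(\tau\otimes\tr)(p)=\sum_i\langle pf_i,f_i\rangle=\sum_i\|c_\xi^*f_i\|_H^2.
\end{equation*}
Unwinding the adjoint gives $c_\xi^*f_i=\sqrt{d(\pi)}\int_F f_i(g)\pi(g)\xi\,d\mu(g)$. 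Expanding $\|c_\xi^*f_i\|_H^2$ in an orthonormal basis $\{e_j\}$ of $H$, interchanging the sums over $i$ and $j$, and applying Parseval's identity on $L^2(F)$ to the function $g\mapsto\overline{\langle\pi(g)\xi,e_j\rangle}$, I obtain
\begin{equation*}
(\tau\otimes\tr)(p)=d(\pi)\sum_j\int_F|\langle\pi(g)\xi,e_j\rangle|^2\,d\mu(g)=d(\pi)\int_F\|\pi(g)\xi\|^2\,d\mu(g)=d(\pi)\cdot\mu(F),
\end{equation*}
since $\pi(g)$ is unitary and $\|\xi\|=1$. As $\mu(F)=\mu(\Gamma\backslash G)$, this yields the claimed formula $\dim_{\mathcal{L}(\Gamma,\sigma)}H=\mu(\Gamma\backslash G)\cdot d(\pi)$.

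The only real obstacle is cocycle bookkeeping. I need to verify that (i) the Schur orthogonality relations and the formal degree $d(\pi)$ are defined consistently for $\sigma$-projective square-integrable representations, and (ii) the isomorphism $L^2(G)\cong l^2(\Gamma)\otimes L^2(\Gamma\backslash G)$ indeed intertwines the $\sigma$-twisted left translation by $\Gamma$ with $\lambda_\sigma\otimes\id$, which typically requires a measurable section of $G\to\Gamma\backslash G$ and introduces a coboundary that does not affect the twisted von Neumann algebra or its trace. Both points are standard in the literature on twisted lattice representations, and once they are in place, the calculation above reduces the projective case to the untwisted Atiyah--Schmid computation.
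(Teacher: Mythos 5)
The paper offers no proof of this proposition: it is quoted from the literature, with references to Atiyah--Schmid for the untwisted Lie-group case and to the works cited after the statement for the projective and general unimodular cases. Your argument is exactly the standard coefficient-map proof from those sources (compare \cite[\S 3.3]{GHJ}): embed $H$ into $L^2(G)$ by $\eta\mapsto\sqrt{d(\pi)}\langle\eta,\pi(\cdot)\xi\rangle$, use Schur orthogonality to see this is an isometric intertwiner, and evaluate $(\tau\otimes\tr)(c_\xi c_\xi^*)$ against an orthonormal basis of $L^2$ of a fundamental domain via Parseval. The computation is correct (the interchange of the sums over $i$ and $j$ is justified by positivity), and the two points you defer --- Schur orthogonality and the formal degree for $\sigma$-projective square-integrable representations, and the fact that $L^2(G)\cong l^2(\Gamma)\otimes L^2(\Gamma\backslash G)$ intertwines the twisted $\Gamma$-translation with $\lambda_\sigma\otimes\id$ up to a coboundary that leaves the trace $\langle\,\cdot\,\delta_e,\delta_e\rangle$ unchanged --- are precisely the content supplied by the projective-case references the paper cites, so nothing essential is missing.
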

This result was first proved in \cite[\S 3]{AS77} for semi-simple simply-connected Lie groups without compact factors (see also \cite[\S 3.5]{GHJ}). 
It is then extended to projective representations in \cite{Ra98}.
See also \cite{Bek00} and\cite[Theorem 4.3]{Enstad22} for more details. 

We discuss the ICC condition of $S$-arithmetic groups, which is needed in the following sections. 
This is an extension of \cite[Lemma 3.3.1]{GHJ}. 
\begin{proposition}\label{pSicc}
Suppose $G$ is a semi-simple algebraic group over a totally real field $F$ such that $G(\mathbb{R})$ is connected, center-free, and without a compact factor, then $\Gamma=G(\mathcal{O}_S)$ is an ICC group if $V_{\infty}\subset S$. 
\end{proposition}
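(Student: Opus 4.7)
The plan is to reduce to the classical ICC statement for lattices in real semisimple Lie groups (the case of \cite[Lemma 3.3.1]{GHJ}) by exploiting the inclusion $G(\mathcal{O}_F)\subset G(\mathcal{O}_S)=\Gamma$, which is available because the hypothesis $V_\infty\subset S$ forces $\mathcal{O}_F\subset\mathcal{O}_S$. I would argue by contradiction: suppose some $\gamma\in\Gamma\setminus\{e\}$ has finite $\Gamma$-conjugacy class, so that the centralizer $Z_\Gamma(\gamma)$ has finite index in $\Gamma$.

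First, I would intersect $Z_\Gamma(\gamma)$ with $G(\mathcal{O}_F)$ to produce a subgroup $C:=Z_{G(\mathcal{O}_F)}(\gamma)$ of finite index in $G(\mathcal{O}_F)$. By the Borel--Harish-Chandra theorem, $G(\mathcal{O}_F)$ is a lattice in the connected real Lie group $G_\infty:=\prod_{v\in V_\infty}G(F_v)\cong G(\mathbb{R})^{|V_\infty|}$, and hence so is $C$. Under the standing hypotheses, $G_\infty$ is center-free, semisimple, and without compact factor, so the Borel density theorem forces $C$ to be Zariski dense in $G_\infty$ viewed as an $\mathbb{R}$-algebraic group. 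The centralizer $Z_{G_\infty}(\gamma)$ of the diagonal image of $\gamma$ is a Zariski closed subgroup containing $C$, and therefore equals all of $G_\infty$; this places $\gamma$ in the abstract center $Z(G_\infty)=Z(G(\mathbb{R}))^{|V_\infty|}=\{1\}$. Finally, since each embedding $F\hookrightarrow F_v=\mathbb{R}$ is injective, the diagonal map $G(F)\hookrightarrow G_\infty$ is injective, forcing $\gamma=e$ and contradicting our choice.

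The hard part is essentially just the correct invocation of Borel density. Because $C$ is an honest lattice in the real Lie group $G_\infty$, the classical version over $\mathbb{R}$ suffices, and no $S$-arithmetic or local-field strengthening of Borel density is required. The only subtlety worth spelling out is that Zariski density of $C$ in $G_\infty$ genuinely forces the algebraic centralizer to coincide with $G_\infty$ as an $\mathbb{R}$-variety (not merely as an abstract subgroup), but this is automatic since commuting with the fixed element $\gamma$ is cut out by polynomial equations in the matrix coordinates.
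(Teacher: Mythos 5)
Your proof is correct and follows essentially the same route as the paper: apply the Borel density theorem to the arithmetic subgroup $G(\mathcal{O}_F)\subset\Gamma$ (available since $V_\infty\subset S$) and conclude that an element with finite conjugacy class must lie in the trivial center. The only cosmetic difference is that you phrase the density step via the Zariski-closed centralizer of $\gamma$, whereas the paper uses the conjugation orbit map $g\mapsto g\gamma g^{-1}$ having finite image; these are equivalent formulations of the same argument.
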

\begin{proof}
Take $\gamma\in \Gamma$ such that its conjugacy class $C_{\Gamma}(\gamma)=\{g\gamma g^{-1}|g\in \Gamma\}$ is finite. 
Then the Zariski closure $\overline{C_{\Gamma}(\gamma)}=C_{\Gamma}(\gamma)$. 
By Borel density theorem, \cite[Theorem 3.2.6]{Zim84}, $G(\mathcal{O}_F)$ is Zariski dense in $G(\mathbb{R})$.
Thus $\Gamma$ is also Zariski dense. 
Consider the map $G(\mathbb{R})\to \overline{C_{\Gamma}(\gamma)}$ given by $g\mapsto g\gamma g^{-1}$. 
We have $|G(\mathbb{R})/Z_{G(\mathbb{R})}(\gamma)|=|\overline{C_{\Gamma}(\gamma)}|$ is finite. 
Thus $Z_{G(\mathbb{R})}(\gamma)$ is a finite-index subgroup of $G(\mathbb{R})$. 
Then $Z_{G(\mathbb{R})}(\gamma)=G(\mathbb{R})$ and $\gamma\in Z(G(\mathbb{R}))$, which shows $\gamma=e$, the identity. 
\end{proof}

\begin{remark}
If $H^2(\Gamma;\mathbb{T})$ is trivial, e.g., $H^2(\PSL(2,\mathbb{Z});\mathbb{T})=1$, all these $\mathcal{L}(\Gamma,\sigma)$ are isomorphic to the untwisted group von Neumann algebra $\mathcal{L}(\Gamma)$.
For a $S$-arithmetic groups, its second cohomology groups are not trivial in general (see \cite{AdemNaf98}). 
Their twisted group von Neumann algebras may be not isomorphic to the ordinary one. 
\end{remark}

\section{Dimensions over the principal $S$-arithmetic groups}\label{sdim}

Let $F$ be a totally real number field with degree $[F:\mathbb{Q}]=n$. 
For each non-archimedean place $v$, we denote the integral ring by $\mathcal{O}_v$ and the maximal ideal by $\mathfrak{m}_v$. 
Let $\varpi_v$ be a uniformizer. 
Let $q_v$ the cardinality of the residue field  $\mathcal{O}_v/\mathfrak{m}_v\cong\mathbb{F}_{q_v}$. 
For the finite set of ramified places $S$ such that $V_{\infty}\subset S $, we let $\mathcal{O}_S$ be the ring of $S$-integers, i.e., $\mathcal{O}_S=\{x\in F^{*}||x|_v\leq  1,v\notin S\}$. 

We will consider the correspondence over the $S$-arithmetic groups of $\PGL(2,\mathcal{O}_S),\SL(2,\mathcal{O}_S)$ as well as the ones of $\PD^{\times}(\mathcal{O}_S),D^{1}(\mathcal{O}_S)$.
Note the latter two are finite groups. 
We first discuss $\SL(2)$ with a family of local Haar measures $\{\nu_p\}$ given in \cite[\S 3.5-3.6]{Pras90}:
\begin{itemize}
    \item For $p\in S\cap V_{\infty}$, $F_v=\mathbb{R}$ and the Haar measure is normalized in the sense $\nu_p(\SO(2))=1$. 
    \item For $p\in S\cap V_f$, let $I_v$ be a Iwahori subgroup of $\SL(2,F_v)$, e.g, the inverse image of $\{\begin{pmatrix}
        a & b\\
        0 & a^{-1}
    \end{pmatrix}|a\in \mathbb{F}_{q_v}^*, b\in \mathbb{F}_{q_v}\}$ under the map $\SL(2,\mathcal{O}_v)\to \SL(2,\mathbb{F}_{q_v})$. 
    The Haar measure is normalized by $\nu_v(I_v)=1$. 
    As $|\SL(2,\mathcal{O}_v)/I_v|=q_v+1$, $\nu_v(\SL(2,\mathcal{O}_v))=q_v+1$. 
\end{itemize}
Let $\nu_S=\prod_{v\in S}\nu_v$ be the product measure on $\SL(2,\mathbb{A}_S)=\prod_{v\in S}\SL(2,F_v)$.  
Thus, by \cite[Theorem 3.7]{Prasad89}, we have
\begin{equation}\label{eSL2S}
\nu_S(\SL(2,\mathcal{O}_S)\backslash \SL(2,\mathbb{A}_S))=d_F^{3/2} \cdot \frac{1}{(2\pi)^{2n}}\cdot \zeta_F(2)\cdot \prod_{v\in S_f}(q_v+1),
\end{equation}
where $d_F$ is the discriminant of $F$ and $\zeta_F$ is the Dedekind zeta function of $F$. 
The functional equation of $\zeta_F$ (see \cite[Theorem VII.5.9]{NeuANT}) gives $\zeta_F(2)=\frac{(2\pi)^{2n}}{2^n}d_F^{-3/2}|\zeta_F(-1)|$. 
Thus we obtain
\begin{equation}
\nu_S(\SL(2,\mathcal{O}_S)\backslash \SL(2,\mathbb{A}_S)) =\frac{|\zeta_F(-1)|}{2^n}\prod_{v\in S_f}(q_v+1),
\end{equation}
where $\zeta_F(-1)$ is proved to be rational \cite[Theorem 8.1]{DeRib}.

\begin{lemma}\label{lratioPGLPSL}
Let $R$ be a unital commutative ring. 
We have
\begin{center}
$\PGL(2,R)/\PSL(2,R)\cong R^{*}/(R^*)^2$. 
\end{center}
\end{lemma}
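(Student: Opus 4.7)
The plan is to set up the quotient $\PGL(2,R)/\PSL(2,R)$ as a single quotient of $\GL(2,R)$, then realize $R^*/(R^*)^2$ as the image of a determinant map on that quotient.

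First I would note that, writing $Z = Z(\GL(2,R)) = \{aI \mid a \in R^*\}$, we have $\PGL(2,R) = \GL(2,R)/Z$ by definition, while $\PSL(2,R)$ is (the image in $\PGL(2,R)$ of) $\SL(2,R) \cdot Z / Z$. By the third isomorphism theorem,
\begin{equation*}
\PGL(2,R)/\PSL(2,R) \;\cong\; \GL(2,R)/(\SL(2,R)\cdot Z).
\end{equation*}

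Next I would consider the composition
\begin{equation*}
\Phi\colon \GL(2,R) \xrightarrow{\det} R^* \twoheadrightarrow R^*/(R^*)^2.
\end{equation*}
Surjectivity is immediate since $\det\bigl(\mathrm{diag}(a,1)\bigr) = a$ for every $a \in R^*$. The main content is to identify $\ker \Phi = \SL(2,R)\cdot Z$. The inclusion $\supseteq$ is clear: $\SL(2,R)$ has determinant $1$, and $\det(aI) = a^2 \in (R^*)^2$. For $\subseteq$, suppose $g \in \GL(2,R)$ has $\det(g) = a^2$ for some $a \in R^*$. Then $a^{-1}g$ has determinant $1$, so $a^{-1}g \in \SL(2,R)$, and $g = (aI)\cdot(a^{-1}g) \in Z \cdot \SL(2,R)$. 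Applying the first isomorphism theorem to $\Phi$ then yields the claim.

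There is essentially no obstacle here; the statement is formal group theory plus the surjectivity of $\det$ on $\GL(2,R)$, which in turn reduces to having diagonal matrices available. The only minor point worth checking explicitly is that for a general unital commutative ring $R$ the center of $\GL(2,R)$ really is $R^* \cdot I$ (which follows from commuting with elementary matrices), so that the identification $\PGL(2,R) = \GL(2,R)/Z$ is unambiguous.
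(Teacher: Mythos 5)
Your proof is correct and follows essentially the same route as the paper's: both descend the determinant to a map onto $R^*/(R^*)^2$ and identify the kernel with (the preimage of) $\PSL(2,R)$. You are in fact slightly more careful than the paper, which asserts $\ker\eta=\PSL(2,R)$ without spelling out the factorization $g=(aI)(a^{-1}g)$ or the surjectivity via $\mathrm{diag}(a,1)$.
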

\begin{proof}
Let us consider the determinant $\det \colon \GL(2,R)\to R^*$. 
It induces a homomorphism $\eta \colon \PGL(2,R) \to R^*/(R^*)^2$ given by
\begin{center}
    $\eta([g])=[\det(g)]\in R^*/(R^*)^2$,
\end{center}
where $[g]$ is the image of $g\in \GL(2,R)$ in $\PGL(2,R)$. 
If $[g_1]=[g_2]$, there exists some $a \in R^*$ such that $g_1=a g_2$ and $\det(g_1)=a^2\det(g_2)$. 
Hence $\eta$ is a well-defined map.
As $\ker \eta=\PSL(2,R)$, 
we have $\PGL(2,R)/\PSL(2,R)\cong R_v^*/(R_v^*)^2$.
\end{proof}

For a prime number $p$, we let $\delta_F(p,S)=\sum_{v\in S,v|p} e_v f_v$, where $e_v,f_v$ are the ramification index and inertia degree of a place $v$ over $p$.    
\begin{lemma}\label{lmuPGLS}
With respect to the Haar measures above, we have
\begin{center}
   $\mu_S(\PGL(2,\mathcal{O}_S)\backslash \PGL(2,\mathbb{A}_S))=\frac{2^{\delta_F(2,S)+1}\cdot |\zeta_F(-1)|}{2^{2n}}\prod_{v\in S_f}(q_v+1)$
\end{center}
\end{lemma}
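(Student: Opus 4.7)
The plan is to reduce the covolume of $\PGL(2,\mathcal{O}_S)$ in $\PGL(2,\mathbb{A}_S)$ to the known covolume of $\SL(2,\mathcal{O}_S)$ in $\SL(2,\mathbb{A}_S)$ displayed in (\ref{eSL2S}), by factoring through $\PSL(2)$. I would carry this out in three steps: first compute the index $[\PGL(2,-):\PSL(2,-)]$ both locally and arithmetically using Lemma~\ref{lratioPGLPSL}; then quantify the effect of the central isogeny $\SL(2)\twoheadrightarrow \PSL(2)$ on covolumes; and finally combine via the general identity $\vol(\Gamma\backslash G) = \tfrac{[G:G']}{[\Gamma:\Gamma\cap G']}\vol((\Gamma\cap G')\backslash G')$ for an open finite-index subgroup $G'\subset G$, under the implicit compatibility of $\mu_S$ with $\nu_S$ through the $\PSL$-quotient.

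For the first step, applying Lemma~\ref{lratioPGLPSL} locally gives $[\PGL(2,F_v):\PSL(2,F_v)] = |F_v^*/(F_v^*)^2|$, which by standard Kummer theory equals $2$ for real $v$, $4$ for finite $v\nmid 2$, and $2^{e_vf_v+2}$ for $v\mid 2$ (via $|F_v^*/(F_v^*)^2| = 2|\mu_2(F_v)|/|2|_v$). The product over $v\in S$ is $2^{n+2|S_f|+\delta_F(2,S)}$. Arithmetically, Lemma~\ref{lratioPGLPSL} together with Dirichlet's $S$-unit theorem and $\mu(F)=\{\pm 1\}$ (since $F$ is totally real) yields $\mathcal{O}_S^*\cong \{\pm 1\}\times\mathbb{Z}^{|S|-1}$, hence $[\PGL(2,\mathcal{O}_S):\PSL(2,\mathcal{O}_S)] = 2^{|S|} = 2^{n+|S_f|}$. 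The ratio of these two indices is $2^{|S_f|+\delta_F(2,S)}$.

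For the second step, the kernel of $\SL(2,\mathbb{A}_S)\twoheadrightarrow \PSL(2,\mathbb{A}_S) = \prod_{v\in S}\SL(2,F_v)/\{\pm I\}$ is the full product $Z:=\prod_{v\in S}\{\pm I\}$, which has order $2^{|S|}$. The diagonally embedded $\SL(2,\mathcal{O}_S)$ meets $Z$ only in the diagonal $\{\pm I_{\mathrm{diag}}\}$, so $[\SL(2,\mathcal{O}_S)\cdot Z:\SL(2,\mathcal{O}_S)] = 2^{|S|-1}$; hence the $\PSL$-covolume equals $2^{1-|S|}$ times the $\SL$-covolume. Combining with the index ratio from step one and substituting (\ref{eSL2S}), the overall prefactor simplifies (using $|S|=n+|S_f|$) to
\[
\frac{2^{|S_f|+\delta_F(2,S)}\cdot 2^{1-|S|}}{2^n} = \frac{2^{\delta_F(2,S)+1}}{2^{2n}},
\]
yielding the stated formula.

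The main obstacle is the bookkeeping of $2$-primary contributions. Two points require special care: distinguishing the full product kernel $Z$ of size $2^{|S|}$ from the diagonal subgroup $\SL(2,\mathcal{O}_S)\cap Z$ of size $2$ --- this is the source of the factor $2^{1-|S|}$ rather than the naive $1/2$ one might expect from quotienting by $\{\pm I\}$ --- and verifying $|F_v^*/(F_v^*)^2| = 2^{e_vf_v+2}$ at places above $2$, whose contributions combine across $S$ to produce the exponent $\delta_F(2,S)$.
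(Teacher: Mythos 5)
Your proof is correct and follows the same route as the paper's: reduce to the $\SL(2)$ covolume (\ref{eSL2S}) through the common intermediary $\PSL(2)$, computing the local and arithmetic indices via Lemma \ref{lratioPGLPSL} and Dirichlet's $S$-unit theorem, with the $\SL\to\PSL$ isogeny contributing the factor $2^{1-|S|}$. The only notable difference is that you make the two-step bookkeeping explicit and use the local index $|F_v^*/(F_v^*)^2|=2^{e_vf_v+2}$ at $v\mid 2$, where the paper's proof writes $2^{e_vf_v}$ --- apparently a typo, since only the exponent $e_vf_v+2$ produces the ratio $2^{\delta_F(2,S)+1-n}$ and hence the stated formula.
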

\begin{proof}
We first identify $\frac{\mu_S(\PGL(2,\mathcal{O}_S)\backslash \PGL(2,\mathbb{A}_S))}{\nu_S(\SL(2,\mathcal{O}_S)\backslash \SL(2,\mathbb{A}_S))}$ with
\begin{center}
    $\prod_{v\in S}\frac{|\PGL(2,F_v)/\PSL(2,F_v)|}{|\SL(2,F_v)/ \PSL(2,F_v)|} \cdot \left( \frac{|\PGL(2,\mathcal{O}_S)/\PSL(2,\mathcal{O}_S|)}{|\SL(2,\mathcal{O}_S)/\PSL(2,\mathcal{O}_S)|}\right)^{-1}$. 
\end{center}
Observe $|F_v^*/(F_v^*)^2|=2,4,2^{e_v f_v}$ if $v$ lies over $\mathbb{R}$, odd prime or $2$ (see \cite[Proposition 4.3.20]{CohenNT1}).  
We apply Lemma \ref{lratioPGLPSL} to get $|\PGL(2,F_v)/\PSL(2,F_v)|$ and the Dirichlet's Theorem for $S$-units \cite[\S 5.4]{PlaVla94} to get $|\PGL(2,\mathcal{O}_S)/\PSL(2,\mathcal{O}_S|)$. 
The ratio above is $2^{\delta_F(2,S)+1-n}$. 
Then it follows Equation \ref{eSL2S}. 
\end{proof}

Now we consider the formal degrees of the local Steinberg representations 
$\St_p$ of $\GL(2,F_v)$ and $\St_S=\prod_{v\in S}\St_v$ of $\GL(2,\mathbb{A}_S)$. 
\begin{lemma}\label{ldSTS}
With respect to the Haar measures above, we have
\begin{center}
    $d(\St_S)=2^n\cdot 2^{-\delta_{F}(2,S)}\cdot \prod_{v\in S_f}\frac{q_v-1}{2(q_{v}+1)}$. 
\end{center}
\end{lemma}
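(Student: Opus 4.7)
The plan is to factor $d(\St_S)=\prod_{v\in S}d(\St_v)$ and compute each local formal degree separately, with respect to the Haar measure $\mu_v$ on $\PGL(2,F_v)$ inherited from $\nu_v$ on $\SL(2,F_v)$ by pushing forward along the double cover $\SL(2,F_v)\twoheadrightarrow \PSL(2,F_v)$ (which halves the measure) and then extending uniquely to $\PGL(2,F_v)\supset\PSL(2,F_v)$.

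At a finite place $v\in S_f$, I would invoke the standard formal degree of the Steinberg representation on $\PGL(2,F_v)$: with the $\PGL$-Iwahori normalized to volume one, the formal degree equals $\frac{q_v-1}{2(q_v+1)}$ (the factor $\tfrac{1}{2}$ arising from the Weyl group of $\PGL(2)$, as in Macdonald's formal degree formula, or equivalently from the Euler--Poincar\'e measure on the Bruhat--Tits tree). It remains to determine $\mu_v(\bar I_v^{\PGL})$ under the inherited measure: from $\nu_v(I_v)=1$ one gets $\mu_v(\bar I_v^{\PSL})=\tfrac{1}{2}$, and the index $[\bar I_v^{\PGL}:\bar I_v^{\PSL}]$ is identified via the determinant $\PGL(2,F_v)\to F_v^\ast/(F_v^\ast)^2$ with $|\mathcal{O}_v^\ast(F_v^\ast)^2/(F_v^\ast)^2|=|\mathcal{O}_v^\ast/(\mathcal{O}_v^\ast)^2|$, which equals $2$ for odd $v$ and $2^{e_vf_v+1}$ for $v\mid 2$. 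Rescaling the formal degree by $\mu_v(\bar I_v^{\PGL})^{-1}$ gives $d(\St_v)=\frac{q_v-1}{2(q_v+1)}$ for odd $v$ and $d(\St_v)=\frac{q_v-1}{2(q_v+1)}\cdot 2^{-e_vf_v}$ for $v\mid 2$.

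At an archimedean place $v\in V_\infty$, the Steinberg $\St_v=\omega_v\otimes(\mathcal{D}_2^+\oplus \mathcal{D}_2^-)$ restricts on $\PSL(2,\mathbb{R})=\PGL(2,\mathbb{R})^\circ$ to $\mathcal{D}_2^+\oplus \mathcal{D}_2^-$, whose two non-isomorphic summands are swapped by a representative of the non-identity component. Beginning with Harish-Chandra's formal degree for $\mathcal{D}_2^\pm$ on $\SL(2,\mathbb{R})$ with its standard Haar measure, rescaling to $\nu_v(\SO(2))=1$, passing through the double cover, and applying Schur orthogonality on the two $\PSL$-cosets of $\PGL$ produces $d(\St_v)=2$. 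The archimedean places thus contribute $2^n$, and multiplying all local factors yields the stated formula, the dyadic corrections $2^{-e_vf_v}$ collecting into $2^{-\delta_F(2,S)}$.

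The main obstacle is the careful bookkeeping of powers of $2$ across odd, dyadic, and archimedean places: at dyadic places, $|\mathcal{O}_v^\ast/(\mathcal{O}_v^\ast)^2|$ grows to $2^{e_vf_v+1}$ and supplies the $2^{-\delta_F(2,S)}$ correction, while at archimedean places the reducibility of $\St_v|_{\PSL(2,\mathbb{R})}$ forces one to compute $d_{\PGL}(\St_v)$ via Schur orthogonality across the two cosets rather than through a naive index formula.
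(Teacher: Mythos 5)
Your proposal is correct and follows essentially the same route as the paper: factor $d(\St_S)=\prod_{v\in S}d(\St_v)$, and at each finite place convert the Iwahori-normalized formal degree of the $p$-adic Steinberg to the measure inherited from $\nu_v$ on $\SL(2,F_v)$ by tracking the index $|\mathcal{O}_v^{*}/(\mathcal{O}_v^{*})^{2}|$ (equal to $2$ at odd $v$ and $2^{e_vf_v+1}$ at $v\mid 2$), which is exactly the bookkeeping the paper performs via $\mu_v(\PGL(2,\mathcal{O}_v))$ and Lemma \ref{lratioPGLPSL}. The one genuine divergence is at the real places: you invoke Harish-Chandra's formal degree together with Schur orthogonality across the two $\PSL$-cosets, whereas the paper anchors the constant with the measure-independent identity $\dim_{\mathcal{L}(\PSL(2,\mathbb{Z}))}\mathcal{D}_2^{\pm}=\tfrac{1}{12}$ from Goodman--de la Harpe--Jones combined with the covolume $\nu_{\infty}(\SL(2,\mathbb{Z})\backslash\SL(2,\mathbb{R}))=\tfrac{1}{24}$ coming from Prasad's volume formula. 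Your version reaches the same value $d(\St_{\infty})=2$, but be aware that ``rescaling to $\nu_v(\SO(2))=1$'' does not by itself determine a Haar measure on the noncompact group $\SL(2,\mathbb{R})$, so to make the archimedean step airtight you still need a covolume anchor (or Prasad's explicit normalization) --- which is precisely what the paper's detour through the von Neumann dimension supplies.
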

\begin{proof}
The formal degrees of $\St_v$ are listed as follows. 
\begin{enumerate}
    \item For $v\in V_{\infty}$, 
    the Haar measure is normalized as $\mu_{\infty}(\SL(2,\mathbb{Z})\backslash \SL(2,\mathbb{R}))=\frac{1}{24}$, we have $\mu_{\infty}(\PSL(2,\mathbb{Z})\backslash \PSL(2,\mathbb{R}))=\frac{1}{24}$. 
    As shown in
    \cite[Example 3.3.4]{GHJ}, $\dim_{\mathcal{L}(\PSL(2,\mathbb{Z}))}\mathcal{D}_2^+=\frac{1}{12}$ (see the discussion below Lemma \ref{ldgdim} for $\mathcal{D}_2^\pm$). 
    Thus, by Proposition \ref{pas}, $d(\mathcal{D}_2^+)=d(\mathcal{D}_2^-)=2$ as discrete series of $\SL(2,\mathbb{R})$. 
    
    Note $\St_{\infty}=\mathcal{D}_2^+ \oplus \mathcal{D}_2^-$ (up to a central character). 
    Hence $\dim_{\mathcal{L}(\PSL(2,\mathbb{Z}))}\St_{\infty}=\frac{1}{6}$. 
    Observe that
    \begin{center}
        $\mu_{\infty}(\PSL(2,\mathbb{Z})\backslash \PGL(2,\mathbb{R}))=2\mu_{\infty}(\PSL(2,\mathbb{Z})\backslash \PSL(2,\mathbb{R}))=\frac{1}{12}$. 
    \end{center}
    Thus, by Proposition \ref{pas} again, we know $d(\St_{\infty})=2$. 
    
    \item For $p\in S_f$, as $\mu_v(\SL(\mathcal{O}_v))=q_{v}+1$, we have $\mu_v(\PSL(\mathcal{O}_v))=\frac{q_{v}+1}{2}$ and $\mu_v(\PGL(\mathcal{O}_v))=q_{v}+1$ or $2^{e_{v}f_{v}}(q_{v}+1)$ when $2\mid q_v$ or $2\nmid q_v$.  
    By \cite[Lemma 2.2]{Rgw83} and \cite[\S 2.2]{CMS90} (where the local Haar measure $\nu_p$ is normalized in the sense of that the volume $\PGL(\mathcal{O}_v)$ is always $1$), 
     we have $d(\St_v)=\frac{q_v-1}{2(q_{v}+1)}$ or $2^{-e_{v}f_{v}}\frac{q_v-1}{2(q_{v}+1)}$, which depends on $2\mid q_v$ or $2\nmid q_v$. 
\end{enumerate}
Hence we have
\begin{center}
    $d(\St_S)=\prod_{v\in S}d(\St_v)=2^n\cdot 2^{-\delta_{F}(2,S)}\cdot \prod_{v\in S_f}\frac{q_v-1}{2(q_{v}+1)}$. 
\end{center}
\end{proof}

We let $\sigma_{\pi}$ denote the $2$-cocycle of the projective representation which comes from an ordinary irreducible representation $\pi$. 
\begin{proposition}\label{pvdimSt}
    $\dim_{\mathcal{L}(\PGL(2,\mathcal{O}_S),\sigma_{\St_S})}\St_S=\frac{2|\zeta_F(-1)|}{2^{|S|}}\prod_{v\in S_f}(q_{v}-1)$
\end{proposition}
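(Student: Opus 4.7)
The plan is to apply Proposition \ref{pas} directly to the lattice $\Gamma = \PGL(2,\mathcal{O}_S)$ inside the unimodular group $G = \PGL(2,\mathbb{A}_S) = \prod_{v\in S}\PGL(2,F_v)$, taking $\pi = \St_S$ as a $\sigma_{\St_S}$-projective square-integrable representation of $G$. First I would note that $\PGL(2,\mathcal{O}_S)$ is indeed a lattice in $\PGL(2,\mathbb{A}_S)$ (a standard $S$-arithmetic fact), and that $\St_S = \bigotimes_{v\in S}\St_v$ is square-integrable on $\PGL(2,\mathbb{A}_S)$ since each local factor $\St_v$ is square-integrable modulo the center. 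The Haar measure on each $\PGL(2,F_v)$ has already been fixed in the computations leading to Lemma \ref{lmuPGLS} (by pushing forward $\nu_v$ from $\SL(2,F_v)$), and we work throughout with the product measure $\mu_S$.

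Proposition \ref{pas} then yields
\begin{equation*}
\dim_{\mathcal{L}(\PGL(2,\mathcal{O}_S),\sigma_{\St_S})}\St_S = \mu_S\bigl(\PGL(2,\mathcal{O}_S)\backslash \PGL(2,\mathbb{A}_S)\bigr)\cdot d(\St_S).
\end{equation*}
Substituting the explicit formulas from Lemma \ref{lmuPGLS} and Lemma \ref{ldSTS},
\begin{equation*}
\dim_{\mathcal{L}(\PGL(2,\mathcal{O}_S),\sigma_{\St_S})}\St_S = \frac{2^{\delta_F(2,S)+1}|\zeta_F(-1)|}{2^{2n}}\prod_{v\in S_f}(q_v+1) \cdot 2^n \cdot 2^{-\delta_F(2,S)}\prod_{v\in S_f}\frac{q_v-1}{2(q_v+1)}.
\end{equation*}
The powers of $2^{\delta_F(2,S)}$ cancel, as do the factors $(q_v+1)$. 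Using $|S| = n + |S_f|$ (since $V_\infty \subset S$ and $[F:\mathbb{Q}] = n$ with $F$ totally real), one simplifies
\begin{equation*}
\frac{2\cdot 2^n}{2^{2n}\cdot 2^{|S_f|}}|\zeta_F(-1)|\prod_{v\in S_f}(q_v-1) = \frac{2|\zeta_F(-1)|}{2^{n+|S_f|}}\prod_{v\in S_f}(q_v-1) = \frac{2|\zeta_F(-1)|}{2^{|S|}}\prod_{v\in S_f}(q_v-1),
\end{equation*}
which is the desired formula.

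The main obstacle is nothing computational but rather the verification that Proposition \ref{pas} applies in this projective $S$-adelic setting: one must confirm that the Haar measures used implicitly in Lemma \ref{ldSTS} to define each formal degree $d(\St_v)$ are precisely the measures $\nu_v$ (pushed to $\PGL(2,F_v)$) that enter Lemma \ref{lmuPGLS}, so that the product of $\mu_S$-volume and $\mu_S$-formal degree is measure-independent as required by Proposition \ref{pas}. This consistency is in fact built into both lemmas, so beyond bookkeeping the argument is immediate.
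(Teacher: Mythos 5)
Your proof is correct and is exactly the argument the paper intends: apply Proposition \ref{pas} to $\Gamma=\PGL(2,\mathcal{O}_S)$ in $\PGL(2,\mathbb{A}_S)$ and multiply the covolume from Lemma \ref{lmuPGLS} by the formal degree from Lemma \ref{ldSTS}; the paper simply states this is ``straightforward'' from those three results. Your explicit cancellation of the $2^{\delta_F(2,S)}$ and $(q_v+1)$ factors and the use of $|S|=n+|S_f|$ fills in the bookkeeping the paper omits.
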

\begin{proof}
It is straightforward by Proposition \ref{pas}, Lemma \ref{lmuPGLS}, and \ref{ldSTS}. 
\end{proof}

\begin{theorem}\label{tvndimquotient}
Given a global Jacquet-Langlands correspondence  $\JL\colon \pi'=\otimes_{v\in V}\pi'_v$ $\mapsto$ $\pi=\otimes_{v\in V} \pi_v$, 
we have
\begin{equation*}
\frac{\dim_{\mathcal{L}(\PGL(2,\mathcal{O}_S),\sigma_{\pi_S})}\pi_S}{\dim_{\mathcal{L}(\PD^{\times}(\mathcal{O}_S),\sigma_{\pi'_S})}\pi'_S}=\frac{2|\zeta_F(-1)||\PD^{\times}(\mathcal{O}_S)|}{2^{|S|}}\prod_{v\in S_f}(q_{v}-1)
\end{equation*}
\end{theorem}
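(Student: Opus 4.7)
The plan is to reduce the general case to the Steinberg representation $\St_S$, for which the dimension has already been computed in Proposition \ref{pvdimSt}. The key observation is that at every place $v\in S$, the formal degrees of $\pi_v$ and $\St_v$ are related by $\dim_{\mathbb{C}}\pi'_v$ via Lemma \ref{ldgdim}, and taking a product over $v\in S$ converts this into a global relation between $d(\pi_S)$ and $d(\St_S)$. Meanwhile, for the finite group $\PD^\times(\mathcal{O}_S)$, the von Neumann dimension of any module is just the complex dimension divided by the group order.

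First, I would apply Proposition \ref{pas} to the numerator. Since $\pi_S$ is a square-integrable $\sigma_{\pi_S}$-projective representation of the unimodular $S$-adelic group $\PGL(2,\mathbb{A}_S)$, and $\PGL(2,\mathcal{O}_S)$ is a lattice therein (using $V_\infty\subset S$), we obtain
\[
\dim_{\mathcal{L}(\PGL(2,\mathcal{O}_S),\sigma_{\pi_S})}\pi_S \;=\; \mu_S(\PGL(2,\mathcal{O}_S)\backslash \PGL(2,\mathbb{A}_S))\cdot d(\pi_S).
\]
Applying Lemma \ref{ldgdim} place by place yields $d(\pi_S)=\prod_{v\in S}d(\pi_v)=d(\St_S)\cdot \dim_{\mathbb{C}}\pi'_S$. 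Substituting back and running Proposition \ref{pas} in reverse for $\St_S$ gives
\[
\dim_{\mathcal{L}(\PGL(2,\mathcal{O}_S),\sigma_{\pi_S})}\pi_S \;=\; \dim_{\mathcal{L}(\PGL(2,\mathcal{O}_S),\sigma_{\St_S})}\St_S \cdot \dim_{\mathbb{C}}\pi'_S,
\]
and Proposition \ref{pvdimSt} supplies the first factor explicitly as $\tfrac{2|\zeta_F(-1)|}{2^{|S|}}\prod_{v\in S_f}(q_v-1)$.

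Second, for the denominator, I would note that $\overline{G'}_S$ is compact (as observed before Lemma \ref{lStdimvol}), so the discrete subgroup $\PD^\times(\mathcal{O}_S)\subset \overline{G'}_S$ is in fact finite. By Proposition \ref{pvdim}(3),
\[
\dim_{\mathcal{L}(\PD^\times(\mathcal{O}_S),\sigma_{\pi'_S})}\pi'_S \;=\; \frac{\dim_{\mathbb{C}}\pi'_S}{|\PD^\times(\mathcal{O}_S)|}.
\]
Taking the ratio of the two expressions, the factor $\dim_{\mathbb{C}}\pi'_S$ cancels and the claimed identity falls out.

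There is no serious obstacle: essentially all the content has been concentrated in the preceding propositions, and the theorem is a clean combination of them. The one item that needs care is bookkeeping of Haar measures, namely to verify that the measures on $\PGL(2,F_v)$ induced from the $\nu_v$ on $\SL(2,F_v)$ (as used in the proof of Lemma \ref{lmuPGLS}) are exactly the measures with respect to which $d(\St_v)$ was computed in Lemma \ref{ldSTS}, so that the two applications of Proposition \ref{pas} (forward for $\pi_S$, backward for $\St_S$) are consistent. This is already guaranteed by the normalization conventions fixed at the beginning of Section \ref{sdim}, so no new computation is needed.
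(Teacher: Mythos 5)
Your proposal is correct and follows essentially the same route as the paper's proof: reduce the denominator via the finite-group formula of Proposition \ref{pvdim}, and reduce the numerator to the Steinberg case by combining Proposition \ref{pas} with the formal-degree proportionality of Lemma \ref{ldgdim}, then invoke Proposition \ref{pvdimSt}. You merely make explicit the forward-and-backward application of Proposition \ref{pas} that the paper compresses into the phrase ``it is equivalent to,'' which is a welcome clarification but not a different argument.
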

\begin{proof}
By Proposition \ref{pvdim}, $\dim_{\mathcal{L}(\PD^{\times}(\mathcal{O}_S),\sigma_{\pi'_S})}\pi'_S=\frac{\dim \pi'_S}{|\PD^{\times}(\mathcal{O}_S)|}$. 
It suffice to prove $\frac{\dim_{\mathcal{L}(\PGL(2,\mathcal{O}_S),\sigma_{\pi_S})}\pi_S}{\dim \pi'_S}=\frac{2|\zeta_F(-1)|}{2^{|S|}}\prod_{v\in S_f}(q_{v}-1)$.
By Lemma \ref{ldgdim}, it is equivalent to $\dim_{\mathcal{L}(\PGL(2,\mathcal{O}_S),\sigma_{\St_S})}\St_S=\frac{2|\zeta_F(-1)|}{2^{|S|}}\prod_{v\in S_f}(q_{v}-1)$, which is obtained in Proposition \ref{pvdimSt}. 
\end{proof}


Let $\zeta_{D}$ be the zeta function of $D$ (see \cite[\S 3.2]{Vign80}), which has a simple pole at $z=1$ and is known to have a factorization:
\begin{center}
$\zeta_{D}(s)=\zeta_{F}(2s)\zeta_{F}(2s-1)\prod_{v\in S_f}(1-q_{v}^{1-2s})$.   
\end{center}
Observe $|\SL(2,\mathcal{O}_S)/\PSL(2,\mathcal{O}_S)|=2$ and $|\PGL(2,\mathcal{O}_S)/\PSL(2,\mathcal{O}_S)|=2^{|S|}$ by Lemma \ref{lratioPGLPSL} together with Dirichlet's Theorem for $S$-units \cite[\S 5.4]{PlaVla94}.  
We may take the discrete subgroups $\SL(2,\mathcal{O}_S))$ and $D^1(\mathcal{O}_S)$ in $D^1(\mathbb{A}_1)=\prod_{v\in S}D^1(F_v)$ and obtain the following result. 
\begin{corollary}
Assume $\pi'=\otimes_{v\in V}\pi'_v \mapsto \pi=\otimes_{v\in V} \pi_v$ as above, we have
\begin{equation*}
\frac{\dim_{\mathcal{L}(\SL(2,\mathcal{O}_S))}\pi_S}{\dim_{\mathbb{C}}\pi'_S}=\abs*{ \frac{\zeta_{D}(0)}{\zeta_{F}(0)}}. 
\end{equation*}
\end{corollary}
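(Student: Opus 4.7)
The strategy is to derive the corollary from Theorem \ref{tvndimquotient}, first by identifying the zeta factor and then by converting the dimension over $\mathcal{L}(\PGL(2,\mathcal{O}_S),\sigma_{\pi_S})$ into the one over $\mathcal{L}(\SL(2,\mathcal{O}_S))$ through the intermediate group $\PSL(2,\mathcal{O}_S)$. Evaluating the factorization $\zeta_D(s)=\zeta_F(2s)\zeta_F(2s-1)\prod_{v\in S_f}(1-q_v^{1-2s})$ at $s=0$ gives $|\zeta_D(0)/\zeta_F(0)|=|\zeta_F(-1)|\prod_{v\in S_f}(q_v-1)$. Since $\PD^{\times}(\mathcal{O}_S)$ is finite, Proposition \ref{pvdim}(3) yields $\dim_{\mathcal{L}(\PD^{\times}(\mathcal{O}_S),\sigma_{\pi'_S})}\pi'_S=\dim_\mathbb{C}\pi'_S/|\PD^{\times}(\mathcal{O}_S)|$, and substituting these two identities into Theorem \ref{tvndimquotient} rewrites it as
\[\dim_{\mathcal{L}(\PGL(2,\mathcal{O}_S),\sigma_{\pi_S})}\pi_S=\frac{2}{2^{|S|}}\left|\frac{\zeta_{D}(0)}{\zeta_{F}(0)}\right|\dim_\mathbb{C}\pi'_S.\]

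Next, I would compare this to $\dim_{\mathcal{L}(\SL(2,\mathcal{O}_S))}\pi_S$ in two stages via $\PSL(2,\mathcal{O}_S)$. Since $[\PGL(2,\mathcal{O}_S):\PSL(2,\mathcal{O}_S)]=2^{|S|}$ by the index computations recorded just before the corollary, the standard scaling of von Neumann dimension under a finite-index subgroup (adapted to twisted algebras with the restricted cocycle) yields $\dim_{\mathcal{L}(\PSL(2,\mathcal{O}_S),\sigma_{\pi_S}|_{\PSL})}\pi_S=2^{|S|}\dim_{\mathcal{L}(\PGL(2,\mathcal{O}_S),\sigma_{\pi_S})}\pi_S$. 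On the other side, $\SL(2,\mathcal{O}_S)\twoheadrightarrow\PSL(2,\mathcal{O}_S)$ is a central extension by $\{\pm I\}$, and the genuine representation $\pi_S$ of $\GL(2,\mathbb{A}_S)$ restricts to a genuine representation of $\SL(2,\mathcal{O}_S)$, so the pullback of $\sigma_{\pi_S}|_{\PSL}$ to $\SL(2,\mathcal{O}_S)$ trivializes. A direct trace computation on the group algebras (equivalently, Proposition \ref{pas} applied with Haar measures on $\SL(2,\mathbb{A}_S)$ and $\PSL(2,\mathbb{A}_S)$ differing by $|\{\pm I\}|=2$) then gives $\dim_{\mathcal{L}(\SL(2,\mathcal{O}_S))}\pi_S=\tfrac{1}{2}\dim_{\mathcal{L}(\PSL(2,\mathcal{O}_S),\sigma_{\pi_S}|_{\PSL})}\pi_S$. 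Stringing everything together,
\[\dim_{\mathcal{L}(\SL(2,\mathcal{O}_S))}\pi_S=\tfrac{1}{2}\cdot 2^{|S|}\cdot\tfrac{2}{2^{|S|}}\left|\tfrac{\zeta_{D}(0)}{\zeta_{F}(0)}\right|\dim_\mathbb{C}\pi'_S=\left|\tfrac{\zeta_{D}(0)}{\zeta_{F}(0)}\right|\dim_\mathbb{C}\pi'_S,\]
which is the statement of the corollary.

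The main obstacle is the cocycle bookkeeping in the second step: one must carefully track $\sigma_{\pi_S}$ under restriction to $\PSL(2,\mathcal{O}_S)$ and show that its pullback through the central quotient $\SL(2,\mathcal{O}_S)\to\PSL(2,\mathcal{O}_S)$ is trivializable, reflecting the fact that $\pi_S$ is an honest, not merely projective, representation of $\SL(2,\mathcal{O}_S)$. Once this trivialization and the accompanying scaling of the canonical trace on $\mathcal{L}(\SL(2,\mathcal{O}_S))$ relative to $\mathcal{L}(\PSL(2,\mathcal{O}_S),\sigma_{\pi_S}|_{\PSL})$ are in place, the remaining arithmetic is routine.
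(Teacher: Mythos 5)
Your proof is correct and follows exactly the route the paper intends: the paper states this corollary without a written proof, but the two index facts it records immediately beforehand ($|\SL(2,\mathcal{O}_S)/\PSL(2,\mathcal{O}_S)|=2$ and $|\PGL(2,\mathcal{O}_S)/\PSL(2,\mathcal{O}_S)|=2^{|S|}$) together with the factorization of $\zeta_D$ at $s=0$ are precisely the ingredients you combine with Theorem \ref{tvndimquotient} and Proposition \ref{pvdim}(3). Your cocycle bookkeeping through the central extension $\SL(2,\mathcal{O}_S)\to\PSL(2,\mathcal{O}_S)$ is in fact more explicit than anything in the paper and the resulting factors $2^{|S|}$ and $\tfrac12$ check out.
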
 
Please note $\SL(2,\mathcal{O}_S)$ is not center-free and thus not ICC while $\PSL(2,\mathcal{O}_S)$ is an ICC group by Proposition \ref{pSicc}. 

\begin{remark}
$\PD^{\times}(\mathcal{O}_S)$ is contained in a cyclic, dihedral or exceptional group $A_4,S_4,A_5$ (see \cite[\S 32.4-\S 32.7]{Voight21}).  
A necessary condition that $\PD^{\times}(\mathcal{O}_S)$ is a cyclic group of order of $m$ or a dihedral group of order $2m$ is that $2\cos\frac{2\pi}{m}\in F$. 
Hence there exists a constant $c_F>0$, which is uniquely determined by $F$ such that $|\PD^{\times}(\mathcal{O}_S)|< c_F$ 
for any quaternion algebra $D$ defined on $F$. 
\end{remark}

\appendix

\bibliographystyle{abbrv}
\typeout{}
\bibliography{MyLibrary} 

\textit{E-mail address}: \href{mailto:junyang@fas.harvard.edu}{junyang@fas.harvard.edu}\\

{Harvard University, Cambridge, MA 02138, USA}

\end{document}